\documentclass[11pt,a4paper]{amsart}
\usepackage{titlesec}

\titleformat{\section}
  {\normalfont\Large\bfseries} 
  {\thesection}                 
  {1em}                        
  {}                           
\usepackage{ragged2e}
\usepackage{bm}
\usepackage{amsmath,amssymb,amsthm}
\usepackage{geometry}
\usepackage{booktabs}
\usepackage{longtable, seqsplit} 
\usepackage{array}
\usepackage{microtype}
\usepackage{lmodern}
\usepackage[T1]{fontenc}
\usepackage[utf8]{inputenc}
\usepackage{hyperref}
\newcolumntype{Y}{>{\centering\arraybackslash}X}
\usepackage{tabularx}
\hypersetup{colorlinks=true, linkcolor=blue, citecolor=blue, urlcolor=blue}
\newtheorem{theorem}{Theorem}[section]

\newtheorem{proposition}[theorem]{Proposition}
\newtheorem{corollary}[theorem]{Corollary}

\newtheorem{definition}[theorem]{Definition}

\newtheorem{remark}[theorem]{Remark}
\newtheorem{question}[theorem]{Open Problem}

\newcommand{\Z}{\mathbb{Z}}

\newcommand{\T}{\mathcal{T}}

\newcommand{\asc}{\operatorname{asc}}

\newcommand{\supp}{\operatorname{supp}}

\newcommand{\EN}[1]{\left\langle #1 \right\rangle}
\newcommand{\LE}[1]{\left\langle\!\!\left\langle #1 \right\rangle\!\!\right\rangle}
\newcommand{\LEc}[1]{\left\langle\!\!\left\langle #1 \right\rangle\!\!\right\rangle_c}
\title[Cyclic Latin Eulerian Numbers]{Cyclic Latin Eulerian Numbers}

\author{Madjid Mirzavaziri}
\address{\bf Department of Pure Mathematics, Ferdowsi University of Mashhad, Mashhad, Iran}
\email{mirzavaziri@gmail.com, or mirzavaziri@um.ac.ir}
\author{Daniel Yaqubi$^{*}$}
\address{\bf $^*$Department of Computer science, University of Torbat-e Jam, Torbat-e Jam, Iran.}
\email{yaqubi@tjamcaas.ac.ir, or daniel\_yaqubi@yahoo.es}
\thanks{Corresponding author: Daniel Yaqubi}

\subjclass[2020]{05A05, 05A15, 05B15, 05A19, 05C45}
\keywords{Eulerian numbers, Latin squares, cyclic group, Hamiltonian paths, Cayley graphs, necklace counting}

\begin{document}
\begin{abstract}
We introduce the directed cyclic difference inventory $D_n(\mathbf{m})$ to study Latin Eulerian numbers restricted to row-reorderings of the cyclic Latin square, offering an orientation-sensitive refinement of the prescribed-edge-length Hamiltonian path problem. We establish exact enumeration formulas, symmetries, and realizability obstructions for this inventory. Furthermore, we reduce the cyclic total-ascent statistic directly to endpoint-refined Eulerian statistics via the identity $\Sigma(L_\pi) = n \operatorname{asc}(\pi) + \pi(1) - \pi(n)$, yielding a closed-form expression for the cyclic Latin-Eulerian polynomial.
\end{abstract}
\maketitle

\section{Introduction}

Eulerian numbers (\href{https://oeis.org/A008292}{OEIS A008292}) counting the ascents of permutation and Latin squares (\href{https://oeis.org/A002860}{OEIS A002860}) are two foundational, yet fundamentally distinct, structures in combinatorics. A natural question emerges at their intersection: what happens to Eulerian enumeration when ascents are measured simultaneously across all columns of a Latin square? 
In our foundational work \emph{Latin Eulerian Numbers} \cite{LEN}, we introduced a multivariate refinement to bridge Eulerian statistics and Latin squares. Letting $k_j(L)$ denote the number of ascents in column $j$ of a Latin square $L \in \mathcal{L}_n$, we defined
\begin{equation}
    \LE{\begin{matrix}n\\ k_{1},\dots,k_{n}\end{matrix}} = \#\{L \in \mathcal{L}_{n} : k_{j}(L) = k_{j} \text{ for all } 1 \le j \le n \}.
\end{equation}
While this framework yielded sharp bounds for the total-ascent statistic $\Sigma(L)$ and uncovered a fundamental transition identity, general Latin squares are notoriously resistant to exact enumeration. 

To bypass this obstruction, we focus on an arithmetically rich yet computationally tractable family: the $n!$ row-reorderings of the cyclic Latin square, defined by $L_\pi(i,c)=[\pi_i+c]_n$ for $\pi \in S_n$. Within this setting, we introduce the \emph{cyclic Latin-Eulerian numbers} and systematically analyze their total-ascent generating polynomial $T_n^c(q)$.

Our primary innovation is the \textbf{directed cyclic difference inventory}, $D_n(\mathbf{m})$, which tracks the multiplicity profile $\mathbf{m} = (M_\pi(1), \dots, M_\pi(n-1))$ of successive directed differences $\delta_j = [\pi_{j+1} - \pi_j]_n$. Because the total cyclic ascent is strictly governed by the identity
\[
\Sigma(L_\pi) = n(n-1) - \sum_{\delta=1}^{n-1} \delta M_\pi(\delta),
\]
resolving this inventory completely determines the cyclic Latin--Eulerian distribution. By distinguishing opposed orientations ($\delta$ vs. $n-\delta$), our framework provides an exact, orientation-sensitive refinement of classical sequenceability~\cite{Gordon1961,Ollis2025,BakerFeaver2026} and the Buratti--Horak--Rosa (BHR) conjecture~\cite{HorakRosa2009,PasottiPellegrini2014,OllisPasottiPellegriniSchmitt2021,McKayPeters2022}, moving beyond existence questions to exact enumeration. 

This directed inventory exposes profound structural rigidities: strict translation divisibility ($n \mid D_n(\mathbf{m})$), multiplicative symmetries of the unit group $\mathbb{Z}_n^\times$, and exact boundary states such as $T_n^c(n-1)=n$ and $T_n^c(n)=0$. Furthermore, we compute the inventory support size $R_n = |\operatorname{supp}(D_n)|$ through $n=10$ (yielding the unrecorded sequence $1, 1, 2, 5, 16, 59, \dots$) and completely resolve the dominant-step/bridge profile: for a primary step of multiplicity $n-d$ ($d = \gcd(a,n) \ge 2$) and a bridging step of multiplicity $d-1$, the exact fiber count is $n$ if the bridge is coprime to $d$, and $0$ otherwise.

Complementing the inventory approach, we establish an elegant secondary perspective linking our cyclic model directly to classical endpoint-refined Eulerian statistics \cite{Conger2010}. By separating positive and negative ordinary differences, we prove:
\[
\Sigma(L_\pi) = n\operatorname{asc}(\pi) + \pi(1) - \pi(n).
\]
This isolates the residue of $\Sigma(L_\pi)$, proving it is never divisible by $n$ and is uniformly distributed over the nonzero residues. More importantly, it yields a finite, explicit inclusion--exclusion formula for the coefficients of $T_n^c(q)$.

In sum, we establish a hierarchy of increasingly refined statistics:
\[
\left\langle\!\left\langle {n\atop k_1,\ldots,k_n} \right\rangle\!\right\rangle
\quad\longrightarrow\quad
T_n
\quad\longrightarrow\quad
T_n^c
\quad\longrightarrow\quad
D_n(\mathbf{m}).
\]

The paper is organized as follows. Section~2 reviews the cyclic Latin--Eulerian family. Section~3 links our framework to the BHR conjecture, while Section~4 compares it against the full Latin array. Sections~5 through 7 develop the exact directed inventories, total-ascent statistics, and Eulerian reductions. Finally, Section~8 presents computational data, and Section~9 concludes with open problems.
\section{The cyclic Latin-Eulerian family}
Throughout this paper, we assume $n \ge 1$. Let $\pi$ be a permutation of the set $\{0, 1, \dots, n-1\}$, expressed as a sequence $\pi = (\pi_1, \pi_2, \dots, \pi_n)$. We denote by $L_\pi$ the row-reordered cyclic Latin square whose $i$-th row corresponds to row $\pi_i$ of the standard cyclic Latin square $C$. Explicitly, the entry in row $i$ and column $c$ of $L_\pi$ is defined as
 \[L_\pi(i,c) = (\pi_i + c) \bmod n.\]
Here, $(\pi_i + c) \bmod n$ denotes the unique representative of the residue class of $(\pi_i + c)$ in $\{0, 1, \dots, n-1\}$ and denoted it by $[(\pi_i + c)]_n$ . Finally, following the notation established in \cite{LEN}, we let $k_c(L)$ denote the number of ascents in column $c$ of a Latin square $L$, and we define the total ascent statistic as $\Sigma(L) = \sum_{c} k_c(L)$.
\begin{definition}
For a tuple $\mathbf{k} = (k_1, \dots, k_n) \in \{0, \dots, n-1\}^n$, the \emph{cyclic Latin Eulerian number} $\LE{n \atop \mathbf{k}}_c$ is defined as
\[
\LE{n \atop \mathbf{k}}_c := \#\left\{ \pi \in S_n : k_j(L_\pi) = k_j \text{ for all } 1 \le j \le n \right\},
\]
where $k_j(L_\pi)$ denotes the number of ascents in column $j$ of the row-reordered cyclic Latin square $L_\pi$.
\end{definition}

The joint column-ascent distribution over $S_n$ is naturally encoded by the multivariate generating polynomial
\[
F_n^c(x_1, \dots, x_n) := \sum_{\mathbf{k}} \LE{n \atop \mathbf{k}}_c \, x_1^{k_1} \cdots x_n^{k_n} = \sum_{\pi \in S_n} \prod_{j=1}^n x_j^{k_j(L_\pi)},
\]
whose principal specialization yields the univariate cyclic total-ascent polynomial 
\[
\T_n^{c}(x) := F_n^c(x, \dots, x) = \sum_{\pi \in S_n} x^{\Sigma(L_\pi)}.
\]
Since the map $\pi \longmapsto L_\pi$ is injective, the cyclic family $\mathcal{C}_n := \{L_\pi : \pi \in S_n\}$ has cardinality $|\mathcal{C}_n| = n!$, implying $\sum_m T_n^c(m) = n!$. Moreover, because $\mathcal{C}_n \subseteq \mathcal{L}_n$, we have the immediate pointwise bound $T_n^c(m) \le T_n(m) \quad \text{for all } m \in \mathbb{Z}_{\ge 0}.$
For $\pi = (\pi_1, \dots, \pi_n) \in S_n$, define the successive directed cyclic differences by
\[
\delta_j := [\pi_{j+1} - \pi_j]_n \in \{1, \dots, n-1\}, \qquad 1 \le j \le n-1,
\]
 By \cite[Prop.~4.5]{LEN}, exactly $n - \delta_j$ columns exhibit an ascent between rows $j$ and $j+1$, yielding
\begin{equation}
\label{eq:cyclic-ascent-transition}
\Sigma(L_\pi) = n(n-1) - \sum_{j=1}^{n-1} \delta_j.
\end{equation}
To record these difference multiplicities, set $M_\pi(\delta) := \#\{1 \le j \le n-1 : \delta_j = \delta\}$ for $1 \le \delta \le n-1$. The vector $M_\pi = (M_\pi(1), \dots, M_\pi(n-1))$ constitutes the \emph{directed cyclic difference profile} of $\pi$, forming a weak composition of $n-1$. Expressing $\sum_{j=1}^{n-1} \delta_j = \sum_{\delta=1}^{n-1} \delta \, M_\pi(\delta)$ transforms \eqref{eq:cyclic-ascent-transition} into the inventory form
\begin{equation}
\label{eq:cyclic-ascent-inventory}
\Sigma(L_\pi) = n(n-1) - \sum_{\delta=1}^{n-1} \delta \, M_\pi(\delta).
\end{equation}

 This identity demonstrates that $\Sigma(L_\pi)$ is completely governed by $M_\pi$, motivating the introduction of the inventory numbers $D_n(\mathbf{m})$ and their multivariate generating polynomial $\Gamma_n$. 

\begin{definition}
For a weak composition $\mathbf{m} = (m_1, \dots, m_{n-1})$ of $n-1$ into $n-1$ non-negative parts, the \emph{directed cyclic difference inventory} $D_n(\mathbf{m})$ is defined as
\[
D_n(\mathbf{m}) := \#\left\{ \pi \in S_n : M_\pi = \mathbf{m} \right\}.
\]
The associated multivariate inventory polynomial is given by
\[
\Gamma_n(y_1, \dots, y_{n-1})=\sum_{\pi \in S_n} \prod_{\delta=1}^{n-1} y_\delta^{M_\pi(\delta)} := \sum_{\mathbf{m}} D_n(\mathbf{m})  \bm y^{\bm m}.
\]
Consequently,
\begin{equation}
T_n^c(m)
=\sum_{\substack{\bm m:\,\sum m_\delta=n-1\\
\sum\delta m_\delta=n(n-1)-m}}
D_n(\bm m).
\end{equation}
\end{definition}

\section{Basic Properties and Connection to the Buratti--Horak--Rosa (BHR) Conjecture}\label{sec:cyclic-basic-properties}
We establish the fundamental symmetry, divisibility, and total mass identities for the cyclic total-ascent distribution and difference inventory.
\begin{theorem}\label{thm:cyclic-div}
For every composition $\mathbf m$ of $n-1$, $n\mid D_n(\mathbf m)$.
\end{theorem}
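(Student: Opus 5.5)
The plan is to exhibit a free action of the cyclic group $\mathbb{Z}_n$ on $S_n$ that preserves the directed cyclic difference profile $M_\pi$. The fibre $\{\pi : M_\pi=\mathbf m\}$ is then a union of orbits of size exactly $n$, which gives the divisibility.

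For $t\in\mathbb{Z}_n$ and $\pi=(\pi_1,\dots,\pi_n)\in S_n$, regarded as a permutation of $\{0,\dots,n-1\}$, define the translate
\[
\tau_t(\pi) := \bigl([\pi_1+t]_n,\,[\pi_2+t]_n,\,\dots,\,[\pi_n+t]_n\bigr).
\]
The argument then runs in three short steps.

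\textbf{Well-defined action.} Since $x\mapsto [x+t]_n$ is a bijection of $\{0,\dots,n-1\}$, $\tau_t(\pi)$ is again a permutation in $S_n$. Clearly $\tau_0=\mathrm{id}$ and $\tau_s\circ\tau_t=\tau_{s+t}$, so this is a $\mathbb{Z}_n$-action.

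\textbf{Invariance of the profile.} The successive directed differences of $\tau_t(\pi)$ satisfy
\[
[(\pi_{j+1}+t)-(\pi_j+t)]_n=[\pi_{j+1}-\pi_j]_n=\delta_j .
\]
So the difference sequence $(\delta_1,\dots,\delta_{n-1})$ is unchanged. Hence $M_{\tau_t(\pi)}=M_\pi$, and each fibre $\{\pi: M_\pi=\mathbf m\}$ is a union of orbits.

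\textbf{Freeness.} Suppose $\tau_t(\pi)=\pi$. Comparing first entries gives $[\pi_1+t]_n=\pi_1$, so $t\equiv 0 \pmod n$. Thus every stabilizer is trivial and every orbit has exactly $n$ elements.

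Partitioning the fibre into orbits then gives $D_n(\mathbf m)=n\cdot(\text{number of orbits})$, so $n\mid D_n(\mathbf m)$.

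There is no real obstacle. The only point needing care is that translation acts on the \emph{values} $\pi_i$, not on the positions. This is exactly why it preserves the directed differences, and in particular their orientation ($\delta$ versus $n-\delta$), and not merely the unordered differences. The same argument covers the degenerate case $n=1$ trivially.
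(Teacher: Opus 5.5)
Your proposal is correct and follows the paper's proof: translation of values by $t\in\mathbb{Z}_n$ is a free $\mathbb{Z}_n$-action on $S_n$ that preserves every directed difference $\delta_j$. Hence each fibre $\{\pi : M_\pi=\mathbf m\}$ is a union of orbits of size exactly $n$.
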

\begin{proof}
For $t\in\Z_n$ let $\rho_t\in S_n$ act by $(\rho_t\pi)(k)=\pi(k)+t\bmod n$. This is a free action of $\Z_n$ on $S_n$: if $\rho_t\pi=\pi$ then $\pi(k)+t\equiv\pi(k)\pmod n$ for all $k$, forcing $t=0$. Since $(\rho_t\pi)(k{+}1)-(\rho_t\pi)(k)=\pi(k{+}1)-\pi(k)$ is unchanged, $M_{\rho_t\pi}=M_\pi$, so each fibre $\{\pi:M_\pi=\mathbf m\}$ is a union of free $\Z_n$-orbits, each of size exactly $n$.
\end{proof}
\begin{theorem}
\label{thm:cyclic-ascent-inventory}
The cyclic total-ascent generating polynomial $\mathcal{T}_n^c(q)$ can be recovered directly from the multivariate directed inventory polynomial $\Gamma_n(\bm{y})$ via the evaluations
\begin{equation}
\mathcal{T}_n^c(q) 
= q^{n-1} \, \Gamma_n\left(q^{n-2}, q^{n-3}, \dots, q, 1\right) 
= q^{n(n-1)} \, \Gamma_n\left(q^{-1}, q^{-2}, \dots, q^{-(n-1)}\right).
\end{equation}
\end{theorem}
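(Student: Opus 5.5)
The plan is to substitute the two proposed specializations into the definition of $\Gamma_n$ and compare term by term with $\T_n^c(q)=\sum_{\pi\in S_n}q^{\Sigma(L_\pi)}$. Everything we need is already available. The inventory identity \eqref{eq:cyclic-ascent-inventory} writes $\Sigma(L_\pi)$ as $n(n-1)-\sum_\delta \delta M_\pi(\delta)$. In addition, the profile $M_\pi$ is a weak composition of $n-1$, that is, $\sum_{\delta=1}^{n-1}M_\pi(\delta)=n-1$. The proof then reduces to exponent bookkeeping for each monomial.

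For the second equality, set $y_\delta=q^{-\delta}$. Then
\[
\prod_{\delta=1}^{n-1} y_\delta^{M_\pi(\delta)}=q^{-\sum_\delta \delta M_\pi(\delta)},
\]
so that
\[
q^{n(n-1)}\Gamma_n(q^{-1},\dots,q^{-(n-1)})=\sum_{\pi\in S_n}q^{\,n(n-1)-\sum_\delta\delta M_\pi(\delta)}=\sum_{\pi\in S_n}q^{\Sigma(L_\pi)},
\]
using \eqref{eq:cyclic-ascent-inventory}. This is exactly $\T_n^c(q)$.

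For the first equality, set $y_\delta=q^{n-1-\delta}$; this matches the stated list $q^{n-2},\dots,q,1$ for $\delta=1,\dots,n-1$. The monomial of $\pi$ then has exponent
\[
\sum_\delta (n-1-\delta)M_\pi(\delta)=(n-1)\sum_\delta M_\pi(\delta)-\sum_\delta\delta M_\pi(\delta)=(n-1)^2-\sum_\delta \delta M_\pi(\delta).
\]
Multiplying by $q^{n-1}$ raises the exponent to $(n-1)^2+(n-1)-\sum_\delta\delta M_\pi(\delta)=n(n-1)-\sum_\delta\delta M_\pi(\delta)=\Sigma(L_\pi)$, which again gives $\T_n^c(q)$.

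The only step requiring care is this first evaluation: it depends on the constraint $\sum_\delta M_\pi(\delta)=n-1$, which must hold for every $\pi$ individually and not just on average. This holds because there are exactly $n-1$ consecutive pairs $(\pi_j,\pi_{j+1})$. Each pair contributes one difference $\delta_j$, and $\delta_j$ lies in $\{1,\dots,n-1\}$ since $\pi_{j+1}\neq\pi_j$. So the constraint holds for every permutation and both identities follow.
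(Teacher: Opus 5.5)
Your proposal is correct and follows essentially the same route as the paper. Both arguments use the inventory identity $\Sigma(L_\pi)=n(n-1)-\sum_\delta \delta M_\pi(\delta)$ together with the per-permutation constraint $\sum_\delta M_\pi(\delta)=n-1$ for the first evaluation, and read off the second evaluation directly from that identity.
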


\begin{proof}
By \cite[Proposition~4.5]{LEN}, the total ascent score of a cyclic Latin square $L_\pi$ is given by
\[
\Sigma(L_\pi) = n(n-1) - \sum_{\delta=1}^{n-1} \delta M_\pi(\delta).
\]
Since $\sum_{\delta=1}^{n-1} M_\pi(\delta) = n-1$, we can rewrite the exponent as
\[
\Sigma(L_\pi) 
= (n-1)\sum_{\delta=1}^{n-1} M_\pi(\delta) - \sum_{\delta=1}^{n-1} \delta M_\pi(\delta) 
= (n-1) + \sum_{\delta=1}^{n-1} (n - 1 - \delta) M_\pi(\delta).
\]
Summing $q^{\Sigma(L_\pi)}$ over all permutations $\pi \in S_n$ yields the first form:
\[
\mathcal{T}_n^c(q) = \sum_{\pi \in S_n} q^{(n-1) + \sum_{\delta=1}^{n-1} (n - 1 - \delta) M_\pi(\delta)} = q^{n-1} \, \Gamma_n\left(q^{n-2}, q^{n-3}, \dots, q, 1\right).
\]
Alternatively, summing $q^{n(n-1) - \sum_{\delta=1}^{n-1} \delta M_\pi(\delta)}$ directly gives the second form:
\[
\mathcal{T}_n^c(q) = q^{n(n-1)} \sum_{\pi \in S_n} \prod_{\delta=1}^{n-1} \left(q^{-\delta}\right)^{M_\pi(\delta)} = q^{n(n-1)} \, \Gamma_n\left(q^{-1}, q^{-2}, \dots, q^{-(n-1)}\right).
\]
\end{proof}
\begin{proposition}
\label{prop:cyclic-symmetry-divisibility}
For all $n \ge 1$ and $m \in \mathbb{Z}_{\ge 0}$,
\begin{equation}
T_n^c(m) = T_n^c\left(n(n-1) - m\right) \qquad \text{and} \qquad n \mid T_n^c(m).
\end{equation}
\end{proposition}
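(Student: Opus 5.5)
The divisibility half follows directly from Theorem~\ref{thm:cyclic-div}. By the inventory formula for $T_n^c(m)$ stated after the definition of $D_n(\mathbf m)$, the number $T_n^c(m)$ is a finite sum of values $D_n(\mathbf m)$, taken over the compositions with $\sum_\delta \delta m_\delta = n(n-1)-m$. Each summand is divisible by $n$, so the sum is as well. Equivalently, the free $\Z_n$-action $\rho_t$ preserves $M_\pi$, hence preserves $\Sigma(L_\pi)$ by \eqref{eq:cyclic-ascent-inventory}. Each level set $\{\pi : \Sigma(L_\pi)=m\}$ is therefore a union of orbits of size $n$.

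For the symmetry we need an involution $\pi\mapsto\pi'$ on $S_n$ with $\Sigma(L_{\pi'}) = n(n-1)-\Sigma(L_\pi)$. We take the negation $\pi'(k) = [-\pi(k)]_n$, which is a bijection of $\{0,\dots,n-1\}$ and so lies in $S_n$. Its directed differences are $\delta'_j = [\pi_j - \pi_{j+1}]_n = n-\delta_j$, since $\delta_j\neq 0$. Hence $M_{\pi'}(\delta) = M_\pi(n-\delta)$, and by \eqref{eq:cyclic-ascent-transition}
\[
\Sigma(L_{\pi'}) = n(n-1) - \sum_{j=1}^{n-1}(n-\delta_j) = \sum_{j=1}^{n-1}\delta_j = n(n-1) - \Sigma(L_\pi).
\]
The map is an involution, so it is a bijection between $\{\Sigma = m\}$ and $\{\Sigma = n(n-1)-m\}$. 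This gives $T_n^c(m) = T_n^c(n(n-1)-m)$.

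In polynomial form, via Theorem~\ref{thm:cyclic-ascent-inventory}, the same statement reads $\Gamma_n(y_1,\dots,y_{n-1}) = \Gamma_n(y_{n-1},\dots,y_1)$. That reversal invariance is exactly what the negation map provides. No step is a serious obstacle. The only care needed is that $\delta_j\in\{1,\dots,n-1\}$, so that $[-\delta_j]_n = n-\delta_j$ rather than $0$. For $n=1$ both claims are trivial, since $T_1^c(0)=1$ and $1$ divides everything.
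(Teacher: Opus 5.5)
Your proof is correct and follows the paper's argument: divisibility comes from the free translation action (equivalently, from summing the $n$-divisible fibres $D_n(\mathbf m)$), and the symmetry comes from an involution on $S_n$ that sends every directed difference $\delta_j$ to $n-\delta_j$. The only difference is the choice of involution: you use negation $\pi\mapsto[-\pi]_n$, which is the $u=-1$ multiplier of Proposition~\ref{prop:cyclic-symmetry}, whereas the paper uses reversal $(\pi_n,\dots,\pi_1)$; both give $M_{\pi'}(\delta)=M_\pi(n-\delta)$, so either works.
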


\begin{proof}
Reversal $\rho(\pi) = (\pi_n, \dots, \pi_1)$ replaces each cyclic difference $\delta$ with $n - \delta$, giving $M_{\rho(\pi)}(\delta) = M_\pi(n-\delta)$. Thus, $\Sigma(L_{\rho(\pi)}) = n(n-1) - \Sigma(L_\pi)$, establishing a bijection between the fibers for $m$ and $n(n-1) - m$.

For divisibility, the translation action $(t \cdot \pi)_i = [\pi_i + t]_n$ of $\mathbb{Z}_n$ on $S_n$ is free and preserves all cyclic differences, so $M_{t \cdot \pi} = M_\pi$ and $\Sigma(L_{t \cdot \pi}) = \Sigma(L_\pi)$. Consequently, each fiber $\{\pi \in S_n : \Sigma(L_\pi) = m\}$ decomposes into free $\mathbb{Z}_n$-orbits of size $n$, proving $n \mid T_n^c(m)$.
\end{proof}

\begin{theorem}
\label{thm:cyclic-boundary-ascents}
For every $n \ge 3$, the lower boundary values of $T_n^c(m)$ are
\begin{equation}
T_n^c(n-1) = n, \qquad T_n^c(n) = 0, \qquad \text{and} \qquad T_n^c(n+1) = n.
\end{equation}
By reversal symmetry, the upper boundary values satisfy
\begin{equation}
T_n^c\left((n-1)^2\right) = n, \qquad T_n^c\left((n-1)^2-1\right) = 0, \qquad \text{and} \qquad T_n^c\left((n-1)^2-2\right) = n.
\end{equation}
\end{theorem}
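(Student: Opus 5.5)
The plan is to work directly from \eqref{eq:cyclic-ascent-transition}. We have $\Sigma(L_\pi)=n(n-1)-\sum_{j}\delta_j$ with each $\delta_j\le n-1$, so $\Sigma(L_\pi)=n-1+e$, where the \emph{deficit} $e:=\sum_{j=1}^{n-1}(n-1-\delta_j)\ge 0$. Thus $\Sigma=n-1,n,n+1$ correspond to deficits $e=0,1,2$. We only need to classify permutations of small deficit; the upper boundary values then follow from the reversal symmetry of Proposition~\ref{prop:cyclic-symmetry-divisibility}.

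The key device is to lift the walk to the integers. Write $\delta_j\equiv -(1+s_j)\pmod n$, where $s_j:=n-1-\delta_j\ge 0$ and $\sum s_j=e$. Set $a=\pi_1$ and define integer positions $p_1=a$ and $p_{j+1}=p_j-1-s_j$. Then $\pi_j\equiv p_j \pmod n$. The $p_j$ strictly decrease from $a$ to $p_n=a-(n-1)-e$. They occupy $n$ of the $n+e$ integers in the interval $I=[a-n+1-e,\,a]$, and the $e$ omitted integers are exactly the ``skipped'' values. The condition that $\pi$ is a permutation is that the $p_j$ have distinct residues mod $n$, i.e.\ no two visited integers differ by exactly $n$.

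Next, treat the three deficits in turn.
\begin{itemize}
\item For $e=0$: every step is $-1$, so $I$ has length $n$ and the residues are automatically distinct. This gives exactly one permutation per start $a\in\Z_n$, hence $T_n^c(n-1)=n$.
\item For $e=1$: $I=[a-n,a]$ contains both endpoints $a$ and $a-n$, which are always visited (as $p_1$ and $p_n$) and congruent mod $n$. This is impossible, so $T_n^c(n)=0$.
\item For $e=2$: $I=[a-n-1,a]$, and the endpoints $a$ and $a-n-1$ are visited. Since $a\equiv a-n$ and $a-1\equiv a-n-1$, the integers $a-n$ and $a-1$ must both be omitted. Exactly two integers are omitted, so these are the omitted ones. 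As $n\ge 3$, they are not adjacent, so the deficit is realized by two separate skips, each of size one: $\delta=n-2$ at the first step $a\to a-2$ and at the last step $a-n+1\to a-n-1$. The remaining visited integers are $a-2,\dots,a-n+1$, which have distinct residues, none congruent to $a$ or $a-1$. So each $a$ gives exactly one valid $\pi$, and $T_n^c(n+1)=n$.
\end{itemize}
Here the hypothesis $n\ge3$ is used to rule out the single-jump option $\delta=n-3$, which would require the two omitted integers to be adjacent.

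Finally, the upper values follow from the reversal symmetry $T_n^c(m)=T_n^c(n(n-1)-m)$ of Proposition~\ref{prop:cyclic-symmetry-divisibility}, since $n(n-1)-(n-1+e)=(n-1)^2-e$. The main obstacle is the $e=2$ case, specifically making the ``lifted walk'' bookkeeping rigorous. One must show that distinct residues is equivalent to no two visited integers differing by $n$ (the interval has length less than $2n$, so two visited integers can agree mod $n$ only if they differ by exactly $n$), and that the endpoints are forced to be visited. Once that is set up cleanly, all three cases are short counting arguments.
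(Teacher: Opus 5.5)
Your proof is correct and follows the paper's route. It uses the same deficit decomposition $\Sigma(L_\pi)=(n-1)+\sum_j(n-1-\delta_j)$, the same case analysis on total deficit $0,1,2$, the endpoint congruence $\pi_n\equiv\pi_1$ to eliminate deficit $1$, and the same count of $n$ translates in the other two cases. Your integer lift is a rigorous version of the paper's terse ``partial sum evaluation'' in the deficit-$2$ case: it shows that the omitted integers must be $a-1$ and $a-n$, which is why the two skips are forced to the first and last steps, something the paper asserts without justification.
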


\begin{proof}
Setting $e_j := n - 1 - \delta_j \ge 0$, the total ascent statistic can be written as $\Sigma(L_\pi) = (n-1) + \sum_{j=1}^{n-1} e_j$.

\begin{itemize}
    \item For $\Sigma = n-1$, we must have $e_j = 0$ ($\delta_j = n-1$) for all $j$. The unique difference sequence $(n-1, \dots, n-1)$ defines a cyclic descending progression with exactly $n$ translations, so $T_n^c(n-1) = n$.
    
    \item For $\Sigma = n$, we have $\sum e_j = 1$, meaning exactly one difference is $n-2$ while the remaining $n-2$ differences are $n-1$. The sum of all differences then satisfies $\sum_{j=1}^{n-1} \delta_j = (n-2)(n-1) + (n-2) \equiv 0 \pmod n$, forcing $\pi_n = \pi_1$, which contradicts $\pi \in S_n$. Thus $T_n^c(n) = 0$.
    
    \item For $\Sigma = n+1$, we have $\sum e_j = 2$. If $e_j = 2$ for some $j$ ($\delta_j = n-3$), partial sum evaluation again forces a repeated residue in $\pi$. Thus $e_r = e_s = 1$ ($r < s$), so $\delta_r = \delta_s = n-2$. To prevent repeated entries in $\pi$, the indices are uniquely forced to $r = 1$ and $s = n-1$. The resulting unique difference pattern $(n-2, n-1, \dots, n-1, n-2)$ admits $n$ translations, yielding $T_n^c(n+1) = n$.
\end{itemize}

Applying the reversal symmetry of Proposition~\ref{prop:cyclic-symmetry-divisibility} directly completes the upper boundary evaluations.
\end{proof}

\begin{proposition}
\label{prop:cyclic-mass}
For every integer $n \ge 1$, the inventory and generating polynomials satisfy
\begin{equation}
\sum_{\mathbf{m}} D_n(\mathbf{m}) = \Gamma_n(1, \dots, 1) = n! \qquad \text{and} \qquad \sum_{\mathbf{k}} \LE{n \atop \mathbf{k}}_c = F_n^c(1, \dots, 1) = n!.
\end{equation}
Furthermore, $\Gamma_n$ is homogeneous of degree $n-1$; that is, for any scalar $t$,
\begin{equation}
\Gamma_n(t\bm{y}) = t^{n-1}\Gamma_n(\bm{y}).
\end{equation}
\end{proposition}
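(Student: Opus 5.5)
The proof is a direct unwinding of the definitions, and I will handle the three claims in order.

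For the total mass of the inventory, I will substitute $y_\delta = 1$ for every $\delta$ in the defining sum
\[
\Gamma_n(\bm y) = \sum_{\pi \in S_n} \prod_{\delta=1}^{n-1} y_\delta^{M_\pi(\delta)}.
\]
Each summand then becomes $1$, so $\Gamma_n(1,\dots,1) = |S_n| = n!$. The equality $\sum_{\mathbf m} D_n(\mathbf m) = \Gamma_n(1,\dots,1)$ comes from the second expression $\sum_{\mathbf m} D_n(\mathbf m)\bm y^{\bm m}$ evaluated at $\bm y = \mathbf 1$. Equivalently, the fibres $\{\pi : M_\pi = \mathbf m\}$ partition $S_n$, because every $\pi$ has exactly one profile $M_\pi$. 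That profile is a weak composition of $n-1$, since each of the $n-1$ differences $\delta_j$ lies in $\{1,\dots,n-1\}$.

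The identity for $F_n^c$ works the same way. Setting all $x_j = 1$ in $\sum_{\pi} \prod_j x_j^{k_j(L_\pi)}$ gives $n!$. The fibres indexed by $\mathbf k$ again partition $S_n$, because each $\pi$ determines its column-ascent vector $(k_1(L_\pi),\dots,k_n(L_\pi))$ uniquely. Each $k_j$ lies in $\{0,\dots,n-1\}$, since a column of length $n$ has at most $n-1$ ascents, so every $\pi$ is counted under exactly one admissible tuple.

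For homogeneity, I will note that every monomial $\prod_\delta y_\delta^{M_\pi(\delta)}$ has total degree $\sum_\delta M_\pi(\delta) = n-1$. This holds because $M_\pi$ counts each of the $n-1$ indices $j$ exactly once. Replacing $\bm y$ by $t\bm y$ therefore multiplies each monomial by $t^{n-1}$, which gives $\Gamma_n(t\bm y) = t^{n-1}\Gamma_n(\bm y)$.

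No step is a genuine obstacle; the only care needed is the degenerate case $n=1$. There $S_1$ has a single element, there are no differences, and the profile is the empty composition of $0$. So $\Gamma_1 = 1$, the mass is $1 = 1!$, and the homogeneity degree is $0$, consistent with the claim.
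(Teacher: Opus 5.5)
Your proposal is correct and follows the paper's proof: you evaluate both generating polynomials at the all-ones point, so each permutation contributes $1$, and you get homogeneity from $\sum_{\delta} M_\pi(\delta) = n-1$. Your remarks that the fibres partition $S_n$ and your check of the degenerate case $n=1$ are harmless additions that the paper leaves implicit.
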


\begin{proof}
Evaluating $\Gamma_n(y_1, \dots, y_{n-1})$ at $y_\delta = 1$ and $F_n^c(x_1, \dots, x_n)$ at $x_j = 1$ reduces each term in the sums to $1$, yielding the overall group cardinality $|S_n| = n!$. 

For the homogeneity relation, note that every permutation $\pi \in S_n$ has exactly $n-1$ successive cyclic differences, implying $\sum_{\delta=1}^{n-1} M_\pi(\delta) = n-1$. Thus, every monomial $y_1^{m_1} \cdots y_{n-1}^{m_{n-1}}$ in $\Gamma_n(\bm{y})$ has total degree $n-1$, which immediately yields $\Gamma_n(t\bm{y}) = t^{n-1}\Gamma_n(\bm{y})$.
\end{proof}

\begin{proposition}
\label{prop:cyclic-symmetry}
For every unit $u \in \mathbb{Z}_n^\times$, the generating polynomial satisfies
\begin{equation}
\Gamma_n(y_1, \dots, y_{n-1}) = \Gamma_n\left(y_{[u]_n}, y_{[2u]_n}, \dots, y_{[(n-1)u]_n}\right),
\end{equation}
with equivalent coefficient symmetries $D_n(m_1, \dots, m_{n-1}) = D_n\left(m_{[u^{-1}]_n}, \dots, m_{[(n-1)u^{-1}]_n}\right)$.
In particular, the choice $u = n-1$ yields the complete reversal symmetry
\begin{equation}
\Gamma_n(y_1, \dots, y_{n-1}) = \Gamma_n(y_{n-1}, \dots, y_1).
\end{equation}
\end{proposition}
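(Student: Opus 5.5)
The plan is to realize multiplication by a unit as a bijection on $S_n$ and track how it transforms the difference profile. Fix $u \in \mathbb{Z}_n^\times$ and define $\mu_u : S_n \to S_n$ by $(\mu_u\pi)_i := [u\pi_i]_n$. Since $x \mapsto [ux]_n$ is a bijection of $\{0,\dots,n-1\}$, the sequence $\mu_u\pi$ is again a permutation. Moreover, $\mu_u$ has inverse $\mu_{u^{-1}}$, so $\mu_u$ is a bijection of $S_n$.

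The key computation is the effect of $\mu_u$ on successive directed differences. Writing $\delta_j$ for the differences of $\pi$, we have
\[
[u\pi_{j+1} - u\pi_j]_n = [u\delta_j]_n .
\]
This value is nonzero because $u$ is a unit and $\delta_j \neq 0$. Hence the difference sequence of $\mu_u\pi$ is $([u\delta_1]_n,\dots,[u\delta_{n-1}]_n)$. Counting multiplicities gives
\[
M_{\mu_u\pi}(\delta) = M_\pi\bigl([u^{-1}\delta]_n\bigr) \qquad (1 \le \delta \le n-1),
\]
because $[u\delta_j]_n = \delta$ exactly when $\delta_j = [u^{-1}\delta]_n$.

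Next, substitute this into $\Gamma_n$. Reindexing the sum over $\pi$ by the bijection $\mu_u$ gives
\[
\Gamma_n(\bm y) = \sum_{\pi} \prod_{\delta} y_\delta^{M_{\mu_u\pi}(\delta)} = \sum_{\pi} \prod_{\delta} y_\delta^{M_\pi([u^{-1}\delta]_n)} .
\]
Now change the product index to $\epsilon = [u^{-1}\delta]_n$, so that $\delta = [u\epsilon]_n$; this permutes $\{1,\dots,n-1\}$. The right side then becomes
\[
\sum_\pi \prod_\epsilon y_{[u\epsilon]_n}^{M_\pi(\epsilon)} = \Gamma_n\bigl(y_{[u]_n},\dots,y_{[(n-1)u]_n}\bigr),
\]
which is the polynomial identity.

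The coefficient form follows the same way, without passing through the polynomial. The bijection $\mu_{u^{-1}}$ carries the fibre $\{M_\pi = \mathbf m\}$ onto the fibre of the profile $\delta \mapsto m_{[u\delta]_n}$. Equivalently, $\mu_u$ carries it onto the fibre of $\delta \mapsto m_{[u^{-1}\delta]_n}$. In either form this yields the stated equality $D_n(m_1,\dots,m_{n-1}) = D_n(m_{[u^{-1}]_n},\dots,m_{[(n-1)u^{-1}]_n})$; which direction is which is pure bookkeeping. Finally, for $u = n-1$ we have $[(n-1)\delta]_n = n-\delta$, which gives the reversal symmetry. This is consistent with the reversal map used in Proposition~\ref{prop:cyclic-symmetry-divisibility}: there reversal composed with negation acts on the profile in the same way.

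The only delicate step is index bookkeeping, that is, getting $u$ versus $u^{-1}$ right when passing between the polynomial and coefficient forms. The argument is otherwise routine, so I expect no real obstacle.
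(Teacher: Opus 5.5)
Your proof is correct and is essentially the paper's argument: multiplication by the unit $u$ is a bijection of $S_n$ that scales every directed difference by $u$, so reindexing the sum over $S_n$ permutes the variables of $\Gamma_n$, and you carry out the $u$ versus $u^{-1}$ bookkeeping more explicitly than the paper does. The only slip is in your non-essential closing aside: in Proposition~\ref{prop:cyclic-symmetry-divisibility} it is reversal \emph{alone} that sends $M_\pi(\delta)$ to $M_\pi(n-\delta)$, exactly as $u=n-1$ does, whereas reversal composed with negation leaves the profile unchanged.
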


\begin{proof}
For any unit $u$, the multiplication map $x \mapsto ux \pmod n$ is an automorphism of $\mathbb{Z}_n$. Composing $\pi \in S_n$ with this map yields a new permutation $\pi_u \in S_n$ whose successive cyclic differences are scaled identically: $\delta_j(\pi_u) \equiv u \delta_j(\pi) \pmod n$. 

This deterministic scaling maps the difference profile via $M_{\pi_u}([u\delta]_n) = M_\pi(\delta)$. Because $\pi \mapsto \pi_u$ acts as a bijection on $S_n$, summing over all permutations simply permutes the variables of $\Gamma_n$ by the multiplier $u$ (and equivalently reindexes the fibers of $D_n$ by $u^{-1}$).

The reversal symmetry follows immediately by taking $u = -1 \equiv n-1 \pmod n$, which maps each difference $\delta$ to $n-\delta$. Geometrically, this exact same symmetry is achieved independently by traversing the Hamiltonian path backward via $\pi_{\mathrm{rev}}(k) = \pi(n+1-k)$, which effectively negates every directed arc.
\end{proof}

\begin{proposition}
\label{prop:realizability-obstructions}
If a directed difference profile $\bm{m}$ is realizable (i.e., $D_n(\bm{m}) > 0$), then it must satisfy the following three structural conditions:
\begin{enumerate}
    \item \emph{Endpoint congruence:} $\sum_{\delta=1}^{n-1} \delta m_\delta \not\equiv 0 \pmod n$. Consequently, the cyclic total-ascent distribution satisfies $T_n^c(m) = 0$ whenever $n \mid m$.
    \item \emph{Coset transition bound:} For every divisor $q > 1$ of $n$,  $\sum_{1 \le \delta \le n-1 , q \mid \delta} m_\delta \le n - q.$
    \item \emph{Subgroup generation:} $\gcd\bigl(n, \{\delta : m_\delta > 0\}\bigr) = 1$.
\end{enumerate}
\end{proposition}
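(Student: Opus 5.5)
All three conditions follow from the same observation. A realizing permutation $\pi$, read as a walk $\pi_1,\pi_2,\dots,\pi_n$ in $\mathbb{Z}_n$, visits every residue exactly once, and each step of the walk adds the directed difference $\delta_j$. I will fix $\pi$ with $M_\pi=\bm m$ and obtain each condition by reducing this walk to a suitable quotient or subgroup of $\mathbb{Z}_n$.

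\emph{Endpoint congruence.} The differences telescope:
\[
\sum_{\delta=1}^{n-1}\delta m_\delta=\sum_{j=1}^{n-1}\delta_j\equiv\pi_n-\pi_1 \pmod n .
\]
This is nonzero modulo $n$ because $\pi_n\neq\pi_1$ whenever $n\ge 2$; the case $n=1$ has an empty profile and is vacuous, and I will note it separately. For the consequence, I will use \eqref{eq:cyclic-ascent-inventory}, which gives $\Sigma(L_\pi)\equiv-\sum_\delta \delta M_\pi(\delta)\pmod n$. Hence $\Sigma(L_\pi)$ is never divisible by $n$, and so $T_n^c(m)=0$ whenever $n\mid m$.

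\emph{Coset transition bound.} Fix a divisor $q>1$ of $n$ and let $H=q\mathbb{Z}_n$, the subgroup of order $n/q$, which has $q$ cosets. A step with $q\mid\delta_j$ keeps the walk inside the same coset of $H$; every other step moves it to a different coset. The walk visits all $q$ cosets, so it must change coset at least $q-1$ times. Counting the $n-1$ steps therefore gives
\[
\sum_{q\mid\delta}m_\delta=\#\{j: q\mid\delta_j\}\le (n-1)-(q-1)=n-q .
\]
An equivalent way to see it: within each coset, the number of steps that stay inside the coset is at most one less than the number of visits to that coset, and summing over the $q$ cosets gives the same bound. This step is the one needing the most care, and the key point is simply that leaving and entering cosets is forced by visiting all of them.

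\emph{Subgroup generation.} Let $g=\gcd(n,\{\delta:m_\delta>0\})$ and suppose $g>1$. Every step lies in the subgroup $g\mathbb{Z}_n$, so all $\pi_k$ lie in the single coset $\pi_1+g\mathbb{Z}_n$, which has only $n/g<n$ elements. This contradicts $\pi$ being a permutation. Alternatively, taking $q=g$ in condition (2) gives $n-1\le n-g$, so $g\le 1$. Either argument completes the proof.
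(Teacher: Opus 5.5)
Your proof is correct and follows the paper's argument essentially step for step: you telescope the differences to $\pi_n-\pi_1$ for (1), count the at least $q-1$ forced coset changes for (2), and for (3) trap the walk in the coset $\pi_1+g\mathbb{Z}_n$ or invoke (2) with $q=g$. One small correction: the case $n=1$ is not vacuous but a genuine exception, since $D_1$ of the empty profile equals $1$ while the empty sum is $0\equiv 0\pmod 1$ (and $T_1^c(0)=1$), so condition (1) needs $n\ge 2$, which the paper also assumes tacitly.
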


\begin{proof}
Assume $\bm{m}$ is realized by some permutation $\pi \in S_n$.

\noindent \textbf{(1)} Telescoping the successive cyclic differences along the path yields $\sum_{j=1}^{n-1} \delta_j \equiv \pi_n - \pi_1 \pmod n$. Because $\pi$ is a bijection, its endpoints must be distinct, forcing this sum to be strictly nonzero modulo $n$. Furthermore, since $\Sigma(L_\pi) = n(n-1) - \sum \delta m_\delta$ and $n(n-1) \equiv 0 \pmod n$, the total ascent score $\Sigma(L_\pi)$ can never be a multiple of $n$. Thus, $T_n^c(m) = 0$ for all $n \mid m$.

\noindent \textbf{(2)} Partition the vertices of $\mathbb{Z}_n$ into $q$ distinct residue classes modulo $q$. An arc corresponding to difference $\delta_j$ stays within the same class if and only if $q \mid \delta_j$. Because $\pi$ forms a Hamiltonian path visiting all $n$ vertices, it must span all $q$ classes. Connecting $q$ distinct components requires a minimum of $q-1$ inter-class steps. Subtracting these from the $n-1$ total edges leaves at most $(n-1) - (q-1) = n-q$ available intra-class arcs.

\noindent \textbf{(3)} If the greatest common divisor were some $q > 1$, every difference occurring in $\pi$ would be divisible by $q$. This would trap the path entirely within a single proper coset of $\mathbb{Z}_n$, preventing it from reaching all $n$ vertices. Equivalently, it violates the transition bound in (2), as we would have $\sum_{q \mid \delta} m_\delta = n-1 > n-q$.
\end{proof}
\begin{remark}\label{remark:obstructions-provenance}
The divisibility, subgroup, and coset restrictions presented in Proposition ~\ref{prop:realizability-obstructions} are directed analogues of established necessary conditions in the Hamiltonian-path literature. Specifically, the basic divisor bounds correspond to condition~(1.1) in Ollis et al.~\cite{OllisPasottiPellegriniSchmitt2021}, while the greatest common divisor and multiplicity constraints adapt the two-length realization results of Horak and Rosa~\cite[Theorem~4.1]{HorakRosa2009}. We claim no originality regarding these underlying group-theoretic obstructions; rather, our contribution lies in their systematic translation into explicit constraints on individual directed inventory vectors $D_n(\mathbf{m})$.
\end{remark}

\begin{remark}[Computational observation]
\label{rem:extra-sym}
Cyclically shifting $\pi$ naturally explains an order-$n$ cyclic symmetry for $\LEc{n \atop \mathbf{k}}$. However, computations for $n \le 5$ (Table~\ref{tab:cyclic-compare}) reveal a surprising fact: $\LEc{n \atop \mathbf{k}}$ is actually invariant under the \emph{full} symmetric group action on $\mathbf{k}$, perfectly mirroring the general invariance of $\LE{n \atop \mathbf{k}}$ \cite[Prop.~2.5]{LEN}. Because arbitrary column permutations destroy the defining row-reordered structure of $L_\pi$, we cannot simply permute the columns to prove this. Identifying the hidden mechanism behind this full invariance remains an open problem.
\end{remark}

\begin{table}[h]
\centering
\begin{tabular}{@{}c l r r r@{}}
\toprule
$n$ & multiset & $\LE{n\atop\mathbf k}_c$ & $\LE{n\atop\mathbf k}$ & ratio \\
\midrule
4 & $\{0,1,1,1\}$ & 1 & 6 & 0.167 \\
4 & $\{1,1,1,2\}$ & 1 & 6 & 0.167 \\
4 & $\{1,1,1,3\}$ & 0 & 6 & 0 \\
4 & $\{0,2,2,2\}$ & 0 & 6 & 0 \\
4 & $\{1,2,2,2\}$ & 1 & 6 & 0.167 \\
4 & $\{2,2,2,3\}$ & 1 & 6 & 0.167 \\
4 & $\{0,1,2,3\}$ & 0 & 2 & 0 \\
4 & $\{1,1,2,2\}$ & 2 & 64 & 0.031 \\
5 & $\{0,1,1,1,1\}$ & 1 & 24 & 0.042 \\
5 & $\{1,1,1,1,2\}$ & 1 & 24 & 0.042 \\
5 & $\{1,1,1,1,3\}$ & 0 & 48 & 0 \\
5 & $\{1,2,2,2,2\}$ & 4 & 2784 & 0.0014 \\
5 & $\{2,2,2,2,2\}$ & 0 & 6480 & 0 \\
\bottomrule
\end{tabular}
\caption{$\LE{n\atop\mathbf k}_c$ against the full $\LE{n\atop\mathbf k}$ of~\cite{LEN}, for $n=4,5$.}
\label{tab:cyclic-compare}
\end{table}
The cyclic family is not merely small; it is structurally incomplete. Most strikingly, at $n=5$, the perfectly balanced composition $(2,2,2,2,2)$ is entirely absent from the cyclic family, despite being the unique mode of the general distribution ($\LE{5 \atop 2,2,2,2,2} = 6480$). This stark omission precisely quantifies the difficulty noted in \cite{LEN} of constructing centrally-valued Latin squares using only $L_\pi$ and symbol relabeling.
\section{Directed Hamiltonian Paths and the BHR Projection}
To analyze step distributions, we model permutations as directed Hamiltonian paths on the Cayley digraph of $\mathbb{Z}_n$, connecting our Latin--Eulerian model to the Buratti--Horak--Rosa (BHR) framework. Although prior BHR literature (e.g., Ollis et al.~\cite{OllisPasottiPellegriniSchmitt2021}) focuses on undirected multisets, our framework evaluates the orientation-sensitive directed fibers $D_n(\mathbf{m})$.

\begin{proposition}\label{Cay}
Let $n \ge 2$, and let $\mathcal{D}_n = \operatorname{Cay}(\mathbb{Z}_n, \mathbb{Z}_n \setminus \{0\})$ be the complete directed Cayley graph on $\mathbb{Z}_n$, whose arcs $a \longrightarrow b$ are labeled by directed differences $\delta = [b-a]_n \in \{1, \ldots, n-1\}$. For a permutation $\pi = (\pi_1, \ldots, \pi_n) \in S_n$, the vertex sequence
\[
\pi_1 \longrightarrow \pi_2 \longrightarrow \cdots \longrightarrow \pi_n
\]
forms a directed Hamiltonian path in $\mathcal{D}_n$, where $M_\pi(\delta) = \#\{j : [\pi_{j+1} - \pi_j]_n = \delta\}$ records the multiplicity of arcs with directed difference $\delta$.

Consequently, $D_n(\mathbf{m}) = \#\{\pi \in S_n : M_\pi = \mathbf{m}\}$ equals the number of directed Hamiltonian paths in $\mathcal{D}_n$ with edge-difference multiplicity vector $\mathbf{m} = (m_1, \ldots, m_{n-1})$, where every realizable profile satisfies $\sum_{\delta=1}^{n-1} m_\delta = n-1$.
\end{proposition}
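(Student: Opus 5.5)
The plan is to set up an explicit bijection between $S_n$ and the set of directed Hamiltonian paths in $\mathcal{D}_n$, and then check that this bijection transports the statistic $M_\pi$ to the arc-label multiplicity vector. The whole argument is bookkeeping: each step is a direct unwinding of the definitions.

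\textbf{Step 1: each permutation gives a Hamiltonian path.} Fix $\pi=(\pi_1,\dots,\pi_n)\in S_n$, viewed as a bijective listing of $\{0,\dots,n-1\}$, which we identify with $\mathbb{Z}_n$.
\begin{enumerate}
\item Since $\pi$ is injective, consecutive entries are distinct, so $\pi_{j+1}-\pi_j\not\equiv 0 \pmod n$. Hence $[\pi_{j+1}-\pi_j]_n\in\{1,\dots,n-1\}$.
\item By the definition of $\mathcal{D}_n=\operatorname{Cay}(\mathbb{Z}_n,\mathbb{Z}_n\setminus\{0\})$, there is therefore an arc $\pi_j\to\pi_{j+1}$, and its label is exactly $\delta_j=[\pi_{j+1}-\pi_j]_n$.
\item Since $\pi$ is bijective, the walk $\pi_1\to\cdots\to\pi_n$ visits every vertex exactly once, so it is a directed Hamiltonian path.
\end{enumerate}

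\textbf{Step 2: the correspondence is bijective.} Conversely, any directed Hamiltonian path $v_1\to\cdots\to v_n$ in $\mathcal{D}_n$ lists all $n$ vertices without repetition. It is therefore the one-line notation of a unique $\pi\in S_n$. The two maps are mutually inverse. The only point needing a remark is that a directed path is determined by its ordered vertex sequence. This holds because $\mathcal{D}_n$ has at most one arc from $a$ to $b$, namely the one labeled $[b-a]_n$. (Note that $\pi$ and its reversal correspond to different directed paths, which is the orientation sensitivity the paper emphasizes.)

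\textbf{Step 3: the statistics match.} Under this bijection, the label of the $j$-th arc is $\delta_j$. Hence the number of arcs labeled $\delta$ equals $\#\{j:\delta_j=\delta\}=M_\pi(\delta)$. Restricting the bijection to the fibre $\{\pi:M_\pi=\mathbf m\}$ gives
$$D_n(\mathbf m)=\#\{\text{directed Hamiltonian paths in }\mathcal{D}_n\text{ with arc-label multiplicity vector }\mathbf m\}.$$

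\textbf{Step 4: the total-mass constraint.} A Hamiltonian path on $n$ vertices has exactly $n-1$ arcs, and each arc has exactly one label in $\{1,\dots,n-1\}$. Therefore $\sum_\delta m_\delta=n-1$ for every realizable $\mathbf m$, in agreement with the homogeneity statement of Proposition~\ref{prop:cyclic-mass}.

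No step poses a real obstacle. The only places requiring care are the nonvanishing of $\delta_j$ in Step 1, which ensures every step of $\pi$ is genuinely an arc of $\mathcal{D}_n$, and the uniqueness of arcs between ordered vertex pairs in Step 2, which makes the correspondence injective.
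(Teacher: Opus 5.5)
Your proposal is correct and follows the same route as the paper: the canonical bijection $\pi \mapsto (\pi_1 \to \cdots \to \pi_n)$ between $S_n$ and directed Hamiltonian paths in $\mathcal{D}_n$, under which the arc labels are exactly the directed differences $\delta_j$. Your extra checks make explicit what the paper's proof leaves implicit: that each $\delta_j$ is nonzero, that there is a unique arc from one vertex to another, and that a path on $n$ vertices has $n-1$ arcs, so $\sum_\delta m_\delta = n-1$.
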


\begin{proof}
Because $\pi \in S_n$ visits every vertex of $\mathbb{Z}_n$ uniquely, $(\pi_1, \dots, \pi_n)$ specifies a Hamiltonian ordering of $V(\mathcal{D}_n)$. Each consecutive pair $(\pi_j, \pi_{j+1})$ forms a valid arc with label $\delta_j = [\pi_{j+1} - \pi_j]_n \in \{1, \ldots, n-1\}$, so $M_\pi(\delta)$ precisely counts arcs of label $\delta$. Conversely, any directed Hamiltonian path $v_1 \longrightarrow \cdots \longrightarrow v_n$ in $\mathcal{D}_n$ corresponds to a unique permutation $\pi = (v_1, \ldots, v_n) \in S_n$ with identical edge-difference multiplicities. This canonical bijection proves the result.
\end{proof}
\begin{remark}
While Hamiltonian paths in Cayley digraphs are classical \cite{CurranGallian1996}, Proposition~\ref{Cay} translates our directed cyclic Latin--Eulerian data into graph-theoretic language. Crucially, the fiber count $D_n(\mathbf{m})$ retains the orientation distinction between $\delta$ and $n-\delta$ before any subsequent projection onto undirected edge-length multisets.
\end{remark}
For any undirected edge $\{a,b\}$ in $\mathbb{Z}_n$, the classical cyclic chord length is given by
\begin{equation}
    \ell(a,b) = \min\{[b-a]_n, [a-b]_n\} = \min\{\delta, n-\delta\}.
\end{equation}
To transition from our directed model to this classical setting, we define the \emph{folding projection} $\rho(\bm{m}) = \bm{t}$ of a directed inventory $\bm{m}$. This projection collapses opposite directed orientations into a single undirected edge-length multiset $\bm{t} = (t_1, \dots, t_{\lfloor n/2 \rfloor})$, defined by:
\begin{equation}
    t_r = m_r + m_{n-r} \qquad \text{for } 1 \le r < \frac{n}{2},
\end{equation}
with the boundary condition $t_{n/2} = m_{n/2}$ whenever $n$ is even.
\begin{proposition}\label{BHR}
The image of the support $\supp(D_n) := \{\bm{m} : D_n(\bm{m}) > 0\}$ under the folding projection $\rho$ is precisely the set of realizable undirected edge-length multisets for cyclic Hamiltonian paths.
\end{proposition}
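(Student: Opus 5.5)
We prove the two inclusions between $\rho(\supp(D_n))$ and the set $\mathcal{R}_n$ of undirected edge-length multisets $\bm t=(t_1,\dots,t_{\lfloor n/2\rfloor})$ realized by some Hamiltonian path in the complete graph $K_n$ on vertex set $\mathbb{Z}_n$, with chord lengths $\ell(a,b)=\min\{[b-a]_n,[a-b]_n\}$. The dictionary is Proposition~\ref{Cay}: permutations $\pi\in S_n$ correspond exactly to directed Hamiltonian paths in $\mathcal{D}_n$. Forgetting orientation sends a directed path to an undirected Hamiltonian path in $K_n$, and an arc of directed difference $\delta$ becomes an edge of length $\min\{\delta,n-\delta\}$.

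\emph{Inclusion $\rho(\supp(D_n))\subseteq\mathcal{R}_n$.} Take $\bm m$ with $D_n(\bm m)>0$ and pick $\pi$ with $M_\pi=\bm m$.

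1. View $\pi_1\to\cdots\to\pi_n$ as an undirected path. It visits every vertex exactly once, so it is a Hamiltonian path of $K_n$.

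2. For $r<n/2$, the edges of length $r$ are exactly the arcs with $\delta_j\in\{r,n-r\}$. There are $m_r+m_{n-r}=t_r$ of them.

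3. When $n$ is even, length $n/2$ arises only from $\delta_j=n/2$, giving $m_{n/2}=t_{n/2}$ edges.

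Hence the undirected length multiset of this path is exactly $\rho(\bm m)$.

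\emph{Inclusion $\mathcal{R}_n\subseteq\rho(\supp(D_n))$.} Take an undirected Hamiltonian path $v_1 - v_2 - \cdots - v_n$ realizing $\bm t$.

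1. Orient it as $v_1\to\cdots\to v_n$. This defines $\pi=(v_1,\dots,v_n)\in S_n$, and hence a profile $\bm m=M_\pi$ with $D_n(\bm m)\ge 1$.

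2. Each undirected edge $\{v_j,v_{j+1}\}$ of length $r$ becomes an arc with $\delta_j=[v_{j+1}-v_j]_n\in\{r,n-r\}$.

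3. Summing over edges gives $m_r+m_{n-r}=t_r$ for $r<n/2$, and $m_{n/2}=t_{n/2}$ when $n$ is even. So $\rho(\bm m)=\bm t$.

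No step here is a genuine obstacle; the argument is bookkeeping. The one point needing care is the middle length $r=n/2$ for even $n$: there $\delta=n-\delta$, so that length has a single directed preimage and must not be double-counted. This is why the definition of $\rho$ carries the separate boundary clause $t_{n/2}=m_{n/2}$.

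We also note that $\rho$ is generally many-to-one, since different orientations of edges can give the same folded multiset. The statement only concerns the image of $\supp(D_n)$, so this plays no role in the proof. It does show that $D_n$ refines the undirected BHR-type counts.
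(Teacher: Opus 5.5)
Your proposal is correct and follows the same route as the paper: forgetting orientation gives one inclusion, and orienting an undirected Hamiltonian path arbitrarily gives the other. Your version makes explicit the bookkeeping that the paper's two-line proof leaves implicit, namely $\min\{\delta,n-\delta\}=r \iff \delta\in\{r,n-r\}$, including the single-preimage case $r=n/2$.
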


\begin{proof}
Every directed Hamiltonian path naturally yields an undirected one by forgetting its orientation. Conversely, assigning an arbitrary traversal direction to any undirected Hamiltonian path produces a directed inventory whose projection $\rho$ perfectly recovers the original undirected multiset.
\end{proof}
As formalized in Proposition~\ref{BHR}, the folding map $\rho$ strips away step orientation, projecting our directed inventory model onto the classical Buratti--Horak--Rosa (BHR) prescribed-edge-length framework \cite{HorakRosa2009, PasottiPellegrini2014, OllisPasottiPellegriniSchmitt2021, McKayPeters2022}. Under this projection, the cardinality of the image $|\rho(\operatorname{supp} D_n)|$ corresponds to \href{https://oeis.org/A352568}{OEIS~A352568}, which records undirected realizable profiles up to $n = 37$ and verifies the BHR conjecture in this range, as established by McKay and Peters \cite[Theorem~2]{McKayPeters2022}.

The essential value of $D_n$ lies in its orientation sensitivity: by keeping opposite step directions ($\delta$ and $n-\delta$) distinct, it provides a fine-grained refinement of the classical BHR space. To quantify the footprint of this directed model, we define its support size:
\[
R_n := |\operatorname{supp}(D_n)| = \# \left\{ (m_1, \dots, m_{n-1}) \mid D_n(m_1, \dots, m_{n-1}) > 0 \right\}.
\]
Exact computation for $1 \le n \le 10$ reveals the directed profile sequence:
\[
R_n = (1, 1, 2, 5, 16, 59, 234, 979, 4258, 18119).
\]
Anchoring this space is the uniform all-ones profile $\mathbf{1} = (1, 1, \dots, 1)$, a primary boundary case where every nonzero difference occurs exactly once.
\begin{theorem}\label{BHR1}
The all-ones inventory is realizable if and only if $n$ is even. Specifically,
\begin{equation*}
    D_n(1, 1, \dots, 1) = 
    \begin{cases}
        0 & \text{if $n$ is odd,} \\
        > 0 & \text{if $n$ is even.}
    \end{cases}
\end{equation*}
Furthermore, for even $n=2r$, this evaluation strictly recovers classical group sequencings:
\begin{equation}
    D_{2r}(1, 1, \dots, 1) = \mathrm{A141598}(r) = 2r \cdot \mathrm{A141599}(r).
\end{equation}
\end{theorem}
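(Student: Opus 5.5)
The plan is to identify permutations with all-ones profile with sequencings of the cyclic group $\mathbb{Z}_n$ (Gordon's notion). Then the three claims follow from Gordon's theorem for odd $n$, an explicit construction for even $n$, and the OEIS definitions for the exact count.

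\textbf{Step 1: reformulation.} A permutation $\pi$ has $M_\pi=(1,\dots,1)$ exactly when the differences $\delta_1,\dots,\delta_{n-1}$ are the $n-1$ nonzero residues of $\mathbb{Z}_n$, each used once. Set $b_1=0$ and $b_{j+1}=\delta_j$. The partial sums of $(b_1,\dots,b_n)$ are $\pi_j-\pi_1$. These are pairwise distinct, so $(b_1,\dots,b_n)$ is a sequencing of $\mathbb{Z}_n$ in Gordon's sense \cite{Gordon1961}. Conversely, a sequencing together with a starting value $\pi_1\in\mathbb{Z}_n$ recovers $\pi$. Hence
\[
D_n(1,\dots,1)=n\cdot\#\{\text{sequencings of }\mathbb{Z}_n\},
\]
which is consistent with Theorem~\ref{thm:cyclic-div}.

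\textbf{Step 2: odd $n$.} I would argue directly with Proposition~\ref{prop:realizability-obstructions}(1). For the all-ones profile,
\[
\sum_{\delta=1}^{n-1}\delta = \frac{n(n-1)}{2}.
\]
When $n$ is odd, $(n-1)/2$ is an integer, so this sum is $\equiv 0\pmod n$. Telescoping then gives $\pi_n\equiv\pi_1$, which is impossible. Therefore $D_n(1,\dots,1)=0$.

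\textbf{Step 3: even $n=2r$.} For existence I would use Gordon's zigzag. The path is $0,1,n-1,2,n-2,3,\dots,r$, with differences $1,-2,3,-4,\dots$ taken modulo $n$. The points to verify are these:
\begin{itemize}
\item the vertices $0,1,2,\dots,r$ and $n-1,n-2,\dots,r+1$ interleave, so they are all distinct;
\item the differences $\pm k$ for $k=1,\dots,n-1$ with alternating signs reduce to distinct nonzero residues. The values $k$ for odd $k$ and $n-k$ for even $k$ together cover $\{1,\dots,n-1\}$, because $n$ is even and so $k$ and $n-k$ have the same parity.
\end{itemize}
This is the only genuinely computational part, and I expect it to be the main place care is needed.

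\textbf{Step 4: the OEIS identity.} Here I would appeal to the definitions. A141599$(r)$ counts sequencings of $\mathbb{Z}_{2r}$ starting at $0$; equivalently, it counts directed terraces or Hamiltonian paths from $0$ with all differences distinct. Strictly, a sequencing already begins with $0$, so the count is as in Step 1. A141598$(r)$ is $2r$ times this number, which matches the freedom in choosing $\pi_1$.

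The only subtlety is matching the OEIS normalization conventions, for example whether reversals are identified. I would check this against small data: $n=2$ gives $2$, and $n=4$ gives $4\cdot 2=8$, since the sequencings of $\mathbb{Z}_4$ are $(0,1,2,3)$ and $(0,3,2,1)$.
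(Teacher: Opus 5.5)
Your proposal is correct. For the odd case and for the OEIS identification it coincides with the paper's proof.
\begin{itemize}
\item \emph{Odd $n$.} The paper also rules out odd $n$ by the endpoint congruence of Proposition~\ref{prop:realizability-obstructions}(1), using $n(n-1)/2\equiv 0 \pmod n$.
\item \emph{OEIS identity.} The paper also reads a realizing permutation as a directed terrace and factors out the free translation action to get $D_{2r}(\mathbf 1)=2r\cdot\mathrm{A141599}(r)$. Your bijection $\pi\leftrightarrow\bigl(\pi_1,(0,\delta_1,\dots,\delta_{n-1})\bigr)$ makes this explicit. Your small-case check $D_4(1,1,1)=8$ agrees with the paper's table.
\end{itemize}

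The genuine difference is the even case. The paper simply cites Gordon's classification of sequenceable abelian groups. You instead exhibit the zigzag path $0,1,-1,2,-2,\dots,r$, whose differences are $1,-2,3,-4,\dots,n-1$, and check directly two things. First, the vertices are distinct because $\{0,\dots,r\}$ and $\{r+1,\dots,n-1\}$ are disjoint and together have $n$ elements. Second, the reduced differences exhaust $\{1,\dots,n-1\}$ by the parity argument. This zigzag is essentially Gordon's own construction in the cyclic case. Your route therefore makes the existence claim self-contained and gives a concrete witness for every even $n$. The paper's citation is shorter and places the result inside a general classification, which is more than is needed here.

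One small caveat applies equally to you and to the paper. The word ``impossible'' in your Step~2 needs $n\ge 2$. For $n=1$ the all-ones profile is the empty vector and is realized by the unique permutation, so $D_1(\mathbf 1)=1$. The odd case of the statement should therefore be read for $n\ge 3$.
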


\begin{proof}
If every nonzero difference occurs exactly once, their sum modulo $n$ evaluates to $\sum_{\delta=1}^{n-1} \delta = \frac{n(n-1)}{2}$. When $n$ is odd, this sum is strictly $0 \pmod n$, which violates the endpoint congruence obstruction established in Proposition~\ref{prop:realizability-obstructions}. Thus, $D_n(\bm{1}) = 0$.

When $n$ is even, the sum is non-zero, avoiding the obstruction. In this case, a realizing permutation is precisely a \emph{directed terrace}, and its successive differences constitute a \emph{sequencing} of the group $\mathbb{Z}_n$. By Gordon's original classification of sequenceable finite abelian groups \cite{Gordon1961}, $\mathbb{Z}_n$ is sequenceable for all even $n$, guaranteeing $D_n(\bm{1}) > 0$. 

Enumeratively, the total number of such permutations on $\mathbb{Z}_{2r}$ is recorded by \href{https://oeis.org/A141598}{OEIS~A141598}. Factoring out the free translation action which partitions the solutions into equivalence classes of size $2r$ yields the normalized count \href{https://oeis.org/A141599}{OEIS~A141599}.
\end{proof}
\begin{remark}\label{rem:all-ones-fiber}
The evaluation $D_n(1, \ldots, 1)$ counts permutations with distinct successive directed differences, which corresponds to the classical sequenceability of $\mathbb{Z}_n$ (Gordon~\cite{Gordon1961}; see also the survey by Ollis~\cite{Ollis2025}). Baker and Feaver~\cite[Theorem~1, Lemma~2]{BakerFeaver2026} recently formalized this bijection and its unit-group symmetry. 

While the resulting formula $D_{2r}(1, \ldots, 1) = A141598(r) = 2r\,A141599(r)$ is already known, evaluating $D_{2r}(\bm{1})$ for early even orders $2r \in \{2, 4, \dots, 14\}$,  directly yields the realizing permutations of \href{https://oeis.org/A141598}{OEIS~A141598}:
\begin{equation*}
    2, \quad 8, \quad 24, \quad 192, \quad 2880, \quad 46272, \quad 1250592.
\end{equation*}
Factoring out the free translation action (dividing by $2r$) isolates the fundamental directed terraces, perfectly reproducing \href{https://oeis.org/A141599}{OEIS~A141599}:
\begin{equation*}
    1, \quad 2, \quad 4, \quad 24, \quad 288, \quad 3856, \quad 89328.
\end{equation*}
 our contribution lies in identifying this classical sequencing enumeration as the natural all-ones fiber within the broader directed inventory framework $D_n(\mathbf{m})$.
\end{remark}
 
Recent work by Baker and Feaver \cite{BakerFeaver2026} provides efficient algorithms for enumerating $\mathbb{Z}_n$ permutations with distinct partial sums, offering a state-of-the-art computational benchmark for these sequences and highlighting ongoing vitality in this classical domain.
The multivariate cyclic Latin Eulerian polynomial can be constructed directly from directed path transitions. For a sequence step from $a$ to $a+\delta$, the product of the column variables where an ascent occurs is given by the monomial weight
\begin{equation*}
    W_{a,\delta}(\bm{x}) := \prod_{r=0}^{n-\delta-1} x_{[-a+r]_n}.
\end{equation*}

\begin{proposition}
Summing these transition weights over all Hamiltonian paths yields the exact multivariate polynomial:
\begin{equation}
    F_n^c(x_0, \dots, x_{n-1}) = \sum_{\pi \in S_n} \prod_{j=1}^{n-1} W_{\pi_j, \delta_j}(\bm{x}).
\end{equation}
Furthermore, $F_n^c$ is invariant under cyclic shifts of its variables:
\begin{equation*}
    F_n^c(x_0, x_1, \dots, x_{n-1}) = F_n^c(x_1, x_2, \dots, x_{n-1}, x_0).
\end{equation*}
\end{proposition}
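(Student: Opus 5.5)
The plan is to establish the weight formula first by identifying exactly which columns ascend at each step, then assemble the product over a path, and finally obtain the cyclic invariance from the free translation action already used in Theorem~\ref{thm:cyclic-div}. Throughout, columns are indexed by $c\in\{0,\dots,n-1\}$, to match the variables $x_0,\dots,x_{n-1}$.

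\emph{Step 1: which columns ascend.} Consider consecutive rows $j,j+1$ of $L_\pi$ with $\pi_j=a$ and $\delta=\delta_j$, so $\pi_{j+1}\equiv a+\delta \pmod n$. In column $c$ the two entries are $u=[a+c]_n$ and $[u+\delta]_n$. Since $1\le\delta\le n-1$, we have $[u+\delta]_n=u+\delta$ exactly when $u\le n-1-\delta$, and then it is an ascent. Otherwise $[u+\delta]_n=u+\delta-n<u$, and it is a descent. This recovers the count $n-\delta$ of \cite[Prop.~4.5]{LEN}, but more precisely it says the ascending columns are those with $[a+c]_n=r$ for $r\in\{0,\dots,n-\delta-1\}$, i.e.\ $c=[r-a]_n$. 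These $n-\delta$ column indices are distinct, so the product of the corresponding column variables is exactly $W_{a,\delta}(\bm x)$.

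\emph{Step 2: assembling the monomial.} For fixed $\pi$, the exponent of $x_c$ in $\prod_{j=1}^{n-1}W_{\pi_j,\delta_j}(\bm x)$ equals the number of $j$ for which column $c$ ascends between rows $j$ and $j+1$, which is $k_c(L_\pi)$. Hence the product equals $\prod_c x_c^{k_c(L_\pi)}$. Summing over $\pi\in S_n$ gives the definition of $F_n^c$, which proves the first display.

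\emph{Step 3: cyclic invariance.} Let $t\in\Z_n$ act by translation, $(t\cdot\pi)_i=[\pi_i+t]_n$. This action preserves every $\delta_j$ and is a bijection of $S_n$. From the explicit formula,
\[
W_{a+t,\delta}(\bm x)=\prod_{r=0}^{n-\delta-1}x_{[-a-t+r]_n},
\]
which is $W_{a,\delta}$ with each $x_c$ replaced by $x_{c-t}$.

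Taking $t=-1$, the term of $(-1)\cdot\pi$ is the term of $\pi$ under the substitution $x_c\mapsto x_{c+1}$. Reindexing the sum by this bijection gives
\[
F_n^c(x_0,\dots,x_{n-1})=F_n^c(x_1,\dots,x_{n-1},x_0).
\]
The only real care needed is the sign convention in this last substitution: one must check that translating by $-1$, rather than $+1$, produces the stated direction of shift. Either direction suffices anyway, since the cyclic group generated by one shift contains the other.
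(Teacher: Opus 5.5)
Your proof is correct and takes the same route as the paper. You identify the ascending columns at each step to get $W_{a,\delta}$, multiply over consecutive row pairs to recover $\prod_c x_c^{k_c(L_\pi)}$, and derive cyclic invariance from the translation $\pi\mapsto t\cdot\pi$, which shifts column indices. Your Step 1 also explicitly proves the column-wise fact (column $c$ ascends iff $[a+c]_n\le n-1-\delta$) that the paper only cites, and your sign check with $t=-1$ is right.
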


\begin{proof}
The formula follows directly from the column-wise mechanics of Proposition 6.1; multiplying $W_{\pi_j, \delta_j}$ over all $n-1$ adjacent row pairs perfectly accumulates the multivariate ascent monomial for $L_\pi$. The cyclic symmetry arises because adding $1$ to every symbol in $\pi$ bijectively maps the cyclic family to itself while shifting all column indices by one.
\end{proof}

Because it tracks exact column indices, $F_n^c(\bm{x})$ serves as a strictly finer, spatially-aware refinement of our earlier models. Evaluating $x_i = x$ for all $i$ strips away this placement data and perfectly recovers the univariate total-ascent polynomial $T_n^c(x)$.
The inventory vectors also suggest a directed analogue of the edge-length polytopes studied in the circulant TSP literature; a systematic polyhedral study is left for future work \cite{Gutekunst2025}.
%
%
\section{\textbf{ Exact enumeration of directed inventories}}
We now solve $D_n(\mathbf m)$ exactly at the two simplest non-trivial profiles.
\begin{theorem}\label{thm:single-delta}
For $1\le\delta\le n-1$, let $\mathbf e_\delta$ denote the composition with $m_\delta=n-1$ and all other parts $0$. Then
\[
D_n(\mathbf e_\delta) \;=\; \begin{cases} n & \text{if } \gcd(\delta,n)=1,\\ 0 & \text{otherwise.}\end{cases}
\]
\end{theorem}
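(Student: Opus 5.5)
If $M_\pi=\mathbf e_\delta$, then every successive directed difference equals $\delta$. Such a permutation is therefore determined by its first entry: $\pi_k=[\pi_1+(k-1)\delta]_n$ for $1\le k\le n$. The whole proof reduces to deciding when this arithmetic progression is a permutation of $\mathbb{Z}_n$.

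\emph{Upper bound.} Each $\pi$ in the fibre $\{\pi:M_\pi=\mathbf e_\delta\}$ is fixed by its starting value $\pi_1\in\mathbb{Z}_n$. Hence the fibre has at most $n$ elements. Equivalently, the fibre is at most one free orbit of the translation action used in Theorem~\ref{thm:cyclic-div}.

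\emph{Case $\gcd(\delta,n)=d\ge 2$.} Here I invoke Proposition~\ref{prop:realizability-obstructions}(3). The only difference present in $\mathbf e_\delta$ is $\delta$, and $\gcd(n,\delta)=d>1$, so no permutation realizes $\mathbf e_\delta$. Directly: the progression stays inside the coset $\pi_1+d\mathbb{Z}_n$, which has only $n/d<n$ elements. Either way $D_n(\mathbf e_\delta)=0$.

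\emph{Case $\gcd(\delta,n)=1$.} Fix any $a\in\mathbb{Z}_n$ and set $\pi_k=[a+(k-1)\delta]_n$. Suppose $\pi_k=\pi_l$ with $1\le k<l\le n$. Then $(l-k)\delta\equiv 0\pmod n$. Since $\delta$ is a unit, this forces $n\mid l-k$, which is impossible because $0<l-k<n$. So the $n$ entries are distinct and $\pi\in S_n$. Every successive difference is $[\delta]_n=\delta$, so $M_\pi=\mathbf e_\delta$. Distinct choices of $a$ give distinct $\pi_1$, hence distinct permutations, so the fibre has exactly $n$ elements.

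None of these steps is a real obstacle. The only care needed is in the realizability direction, where the unit property of $\delta$ is what guarantees the progression visits all $n$ residues exactly once. A shortcut is to apply Proposition~\ref{prop:cyclic-symmetry} with $u=\delta$: it carries $\mathbf e_1$ to $\mathbf e_\delta$. It then suffices to note that $D_n(\mathbf e_1)=n$, since the fibre of $\mathbf e_1$ consists exactly of the $n$ cyclic rotations of the identity.
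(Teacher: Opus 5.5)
Your proof is correct and follows the paper's argument. Both show that the fibre consists exactly of the arithmetic progressions $\pi_k=[\pi_1+(k-1)\delta]_n$, which are permutations of $\mathbb{Z}_n$ precisely when $\gcd(\delta,n)=1$, and the free choice of $\pi_1$ then gives $n$ of them. Your optional shortcut via Proposition~\ref{prop:cyclic-symmetry} with $u=\delta$ is also valid, but it only covers the coprime case, so the coset argument is still needed when $\gcd(\delta,n)>1$.
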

\begin{proof}
$M_\pi=\mathbf e_\delta$ means every consecutive difference of $\pi$ equals $\delta$, i.e.\ $\pi(k)=\pi(1)+(k{-}1)\delta\pmod n$ for all $k$. This is a permutation of $\Z_n$ if and only if $\delta,2\delta,\dots,(n{-}1)\delta$ are all distinct and nonzero modulo $n$, i.e.\ if and only if $\gcd(\delta,n)=1$; when this holds, $\pi$ is determined by the free choice of $\pi(1)\in\Z_n$, giving exactly $n$ permutations, matching the orbit count of Theorem~\ref{thm:cyclic-div}.
\end{proof}

\begin{theorem}\label{thm:two-value}
Fix $n\ge2$, $a\in\{1,\dots,n-1\}$ with $d:=\gcd(a,n)\ge2$, and $b\in\{1,\dots,n-1\}\setminus\{a\}$; let $e:=b\bmod d$. Let $\mathbf m$ be the composition with $m_a=n-d$, $m_b=d-1$, and all other parts $0$. Then
\[
D_n(\mathbf m) \;=\; \begin{cases} n & \text{if } \gcd(e,d)=1,\\ 0 & \text{otherwise.}\end{cases}
\]
\end{theorem}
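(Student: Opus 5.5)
We would prove Theorem~\ref{thm:two-value} by a coset-block decomposition of the Hamiltonian path of Proposition~\ref{Cay}. The idea is to show that the path is forced to sweep each coset of $H:=\langle a\rangle=d\Z_n$ in one contiguous arithmetic run, so that the whole permutation is determined by $\pi_1$.

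\emph{Step 1: dispose of $e=0$.} If $e=0$, then $d\mid b$. Every one of the $n-1$ steps is then divisible by $d$, so $\sum_{d\mid\delta}m_\delta=n-1>n-d$. This violates the coset transition bound of Proposition~\ref{prop:realizability-obstructions}(2) with $q=d$, and hence $D_n(\mathbf m)=0$. This agrees with the formula, since $\gcd(0,d)=d\ge2$.

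\emph{Step 2: the block structure when $e\neq0$.} Now the $a$-steps stay inside a coset of $H$, because $a\equiv0\pmod d$. The $b$-steps always change coset, because $b\not\equiv0\pmod d$. The cosets of $H$ are the $d$ residue classes modulo $d$, each of size $n/d$. The path must visit all $d$ classes, yet it changes class only $d-1$ times. Hence the coset sequence along the path visits each class exactly once, in one contiguous block of $n/d$ consecutive vertices, and the $d-1$ $b$-steps are precisely the steps between consecutive blocks. Inside a block all steps equal $a$. So a block starting at $x$ is $x,x+a,\dots,x+(n/d-1)a$. This is automatically the whole coset $x+H$, because $a$ has order $n/d$ in $\Z_n$. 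The count of $a$-steps is $d(n/d-1)=n-d$, as required.

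\emph{Step 3: the recursion and the coprimality criterion.} Let $x_i$ be the start of block $i$. The last vertex of that block is $x_i+(n/d-1)a\equiv x_i-a$, since $(n/d)a\equiv0\pmod n$. The next block therefore starts at $x_{i+1}=x_i+b-a$. Reducing modulo $d$ gives $x_i\equiv x_1+(i-1)e\pmod d$ for $1\le i\le d$. The blocks are then exactly the cosets containing these starts. So $\pi$ is a permutation if and only if the $d$ residues $x_1+(i-1)e$ are pairwise distinct modulo $d$, that is, if and only if $\gcd(e,d)=1$.

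\emph{Step 4: counting.} Conversely, when $\gcd(e,d)=1$, any choice of $\pi_1=x_1\in\Z_n$ yields, by the construction above, a valid permutation with profile $\mathbf m$. By Steps 2 and 3, every $\pi$ with $M_\pi=\mathbf m$ arises this way and is determined by $\pi_1$. This gives exactly $n$ permutations, which is consistent with Theorem~\ref{thm:cyclic-div}.

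The main obstacle is Step 2: rigorously forcing the contiguity of the blocks. We would argue it by counting coset changes, as above. The one point needing care is that each $a$-run inside a coset can never self-intersect before it has exhausted the coset. This holds because $a$ has order exactly $n/d$.
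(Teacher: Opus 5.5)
Your proof is correct and follows essentially the same route as the paper. Both arguments use the residue classes modulo $d$ (the cosets of $\langle a\rangle$), show that each class is swept by one full run of $n/d-1$ steps of size $a$, show that the classes are visited in the order $r_0, r_0+e,\dots,r_0+(d-1)e$ (hence the coprimality criterion), and index the resulting $n$ permutations by $\pi_1$. The only difference is how complete blocks are forced: the paper counts $a$-steps ($d$ runs of at most $n/d-1$ steps each, totalling $n-d$), which also covers $e=0$, whereas you count class changes ($d-1$ changes for $d$ classes), which needs $e\neq0$ and is why you treat $e=0$ separately via the coset transition bound.
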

\begin{proof}
Write $n=dn'$ and $a=da'$; then $\gcd(a',n')=1$. Repeated addition of $a$ partitions $\Z_n$ into $d$ residue classes $C_0,\dots,C_{d-1}$ modulo $d$, each of size $n'$; writing an element of $C_r$ as $r+kd$ ($k\in\Z_{n'}$), addition of $a$ acts on $k$ by $k\mapsto k+a'\pmod{n'}$, a full cycle since $\gcd(a',n')=1$. So repeated $a$-steps from any starting point in $C_r$ visit every element of $C_r$ exactly once before returning to the start.

\emph{Sufficiency.} Suppose $\gcd(e,d)=1$. Choose any starting residue $r_0\in\{0,\dots,d-1\}$ and any starting element $s\in C_{r_0}$ ($d\cdot n'=n$ choices in all). From $s$, take $n'-1$ steps of $a$ to traverse all of $C_{r_0}$, then one step of $b$ (landing in class $r_0+e\bmod d$), and repeat: since $\gcd(e,d)=1$, the residues $r_0,r_0+e,\dots,r_0+(d-1)e\pmod d$ are all of $\{0,\dots,d-1\}$, so this process visits every class exactly once, using $d$ runs of $a$-steps ($n'-1$ each, totalling $n-d$) separated by $d-1$ steps of $b$, and hence every element of $\Z_n$ exactly once. Different choices of $(r_0,s)$ give different sequences, so $D_n(\mathbf m)\ge n$.

\emph{Necessity and exact count.} Conversely, let $\pi$ realize $\mathbf m$. As $\pi$ uses only steps of size $a$ or $b$, and consecutive $a$-steps stay within one class, the $d-1$ occurrences of $b$ split the sequence into exactly $d$ maximal runs of $a$-steps (a run of length $0$ would leave elements of its class unreachable by any other run, since only $d-1$ separators are available for $d$ classes, so every run has positive length). Since the total number of $a$-steps is $n-d$ across $d$ runs, and a maximal run within a single class has length at most $n'-1$, each run must have length exactly $n'-1$, i.e.\ each run is a complete traversal of one class. The $b$-steps move between classes by the constant residue shift $e\bmod d$, so the sequence of classes visited is $r_0,r_0+e,\dots,r_0+(d-1)e\pmod d$ for the starting class $r_0$; for this to visit all $d$ classes without repetition we need $\gcd(e,d)=1$ (if $\gcd(e,d)=g>1$, only the $d/g$ classes congruent to $r_0$ mod $g$ are ever reached, so $D_n(\mathbf m)=0$ in that case). When $\gcd(e,d)=1$, every such $\pi$ is therefore exactly one of the $n$ sequences constructed above, so $D_n(\mathbf m)=n$.
\end{proof}
\begin{remark}\label{rem:single-step-and-two-value}
While the condition $\gcd(a, n) = 1$ in the single-step case simply identifies $a$ as a generator of $\mathbb{Z}_n$, our framework yields the precise fiber evaluation $D_n\bigl((n-1)\mathbf{e}_a\bigr) = n$, with the permutation uniquely determined by its starting vertex. More generally, Theorem~\ref{thm:two-value} establishes that this exceptional-step mechanism extends to any divisor $d = \gcd(a, n) \ge 2$. Consequently, composite moduli $n$ support significantly richer two-value profiles than a simple parity heuristic suggests. For instance, at $n = 9$ with $a = 3$ ($d = 3$), choosing any $b \not\equiv 0 \pmod 3$ yields the exact fiber count $D_9(\mathbf{m}) = 9$ where $m_3 = 6$ and $m_b = 2$.
\end{remark}

\section{The cyclic total-ascent statistic}\label{sec:Tc-reduction}

The profile-level numbers $D_n(\mathbf m)$ of \S5 refine $T_c(m)$, but $T_c$ itself the count relevant to the total-ascent statistic $\Sigma$ turns out to admit a direct and much simpler reduction, to the classical Eulerian numbers themselves.

\begin{theorem}\label{thm:reduction}
For every $\pi\in S_n$,
\[
\Sigma(L_\pi) \;=\; n\cdot\mathrm{asc}(\pi) \;+\; \pi(1)-\pi(n),
\]
where $\mathrm{asc}(\pi)=\#\{k\in\{1,\dots,n-1\}:\pi(k)<\pi(k+1)\}$ is the ordinary (non-cyclic) ascent count of $\pi$, viewed as a sequence of distinct values in $\Z_n$.
\end{theorem}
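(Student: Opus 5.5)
We start from the transition identity \eqref{eq:cyclic-ascent-transition}, $\Sigma(L_\pi)=n(n-1)-\sum_{j=1}^{n-1}\delta_j$, where $\delta_j=[\pi_{j+1}-\pi_j]_n$. The goal is to express each cyclic difference $\delta_j$ through the ordinary integer difference $\pi_{j+1}-\pi_j$ and then telescope.

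First, since $\pi_j,\pi_{j+1}\in\{0,\dots,n-1\}$ are distinct, the integer difference $\pi_{j+1}-\pi_j$ lies in $\{-(n-1),\dots,n-1\}\setminus\{0\}$. This gives a two-case description of $\delta_j$:
\[
\delta_j=\begin{cases}\pi_{j+1}-\pi_j & \text{if } \pi_j<\pi_{j+1} \text{ (an ascent)},\\ \pi_{j+1}-\pi_j+n & \text{if } \pi_j>\pi_{j+1} \text{ (a descent)}.\end{cases}
\]
Equivalently, $\delta_j=\pi_{j+1}-\pi_j+n\cdot[\pi_j>\pi_{j+1}]$.

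Next we sum over $j$. The integer differences telescope to $\pi(n)-\pi(1)$, and the correction terms contribute $n\cdot\operatorname{des}(\pi)$, where $\operatorname{des}(\pi)=n-1-\operatorname{asc}(\pi)$ because every adjacent pair is either an ascent or a descent. Hence
\[
\sum_{j=1}^{n-1}\delta_j=\pi(n)-\pi(1)+n\bigl(n-1-\operatorname{asc}(\pi)\bigr).
\]

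Substituting this into \eqref{eq:cyclic-ascent-transition}, the $n(n-1)$ terms cancel, leaving $\Sigma(L_\pi)=n\operatorname{asc}(\pi)+\pi(1)-\pi(n)$.

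Nothing here is a genuine obstacle. The one point needing care is the case split: it relies on the convention that $[\cdot]_n$ takes values in $\{0,\dots,n-1\}$ and on the entries of $\pi$ being distinct, so that no difference is $0$ and $\delta_j\in\{1,\dots,n-1\}$ in both cases. As a sanity check, the identity should give $\Sigma=n-1$ for the descending sequence $(n-1,\dots,0)$, where $\operatorname{asc}=0$ and $\pi(1)-\pi(n)=n-1$. This agrees with Theorem~\ref{thm:cyclic-boundary-ascents}.
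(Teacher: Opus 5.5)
Your proof is correct and takes the same approach as the paper. Both start from $\Sigma(L_\pi)=n(n-1)-\sum_j\delta_j$, write each $\delta_j$ as the integer difference $\pi_{j+1}-\pi_j$ plus $n$ exactly at descents, telescope to $\pi(n)-\pi(1)+n\operatorname{des}(\pi)$ with $\operatorname{des}(\pi)=n-1-\operatorname{asc}(\pi)$, and substitute.
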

\begin{proof}
By~\cite[Prop.~4.5]{LEN}, $\Sigma(L_\pi)=n(n-1)-\sum_{k=1}^{n-1}\delta_k$, where $\delta_k=(\pi(k{+}1)-\pi(k))\bmod n\in\{1,\dots,n-1\}$. Since $\delta_k$ is the unique representative of $\pi(k{+}1)-\pi(k)$ in that range, $\delta_k$ equals the integer $\pi(k{+}1)-\pi(k)$ itself when this is positive (an ascent) and equals $\pi(k{+}1)-\pi(k)+n$ when negative (a descent). Summing over $k$, the integer differences telescope: $\sum_k[\pi(k{+}1)-\pi(k)]=\pi(n)-\pi(1)$. Writing $\mathrm{des}(\pi)=(n{-}1)-\mathrm{asc}(\pi)$ for the number of descents,
\[
\sum_{k=1}^{n-1}\delta_k \;=\; \bigl[\pi(n)-\pi(1)\bigr] + n\cdot\mathrm{des}(\pi).
\]
Substituting and simplifying:
\[
\Sigma(L_\pi) = n(n-1) - \pi(n)+\pi(1) - n\bigl[(n-1)-\mathrm{asc}(\pi)\bigr] = n\cdot\mathrm{asc}(\pi)+\pi(1)-\pi(n). \qedhere
\]
\end{proof}
\begin{remark}\label{rem:endpoint-identity}
The identity
\[
\Sigma(L_\pi) = n\,\asc(\pi) + \pi(1) - \pi(n)
\]
is a reformulation of the cyclic difference formula established in our prior work on Latin Eulerian numbers \cite[Prop.~4.5]{LEN}. Here, it serves to link the cyclic Latin-Eulerian statistic with refined permutation statistics, yielding an independent enumeration of $T_n^c(m)$.
\end{remark}
\begin{corollary}\label{cor:mod-n-exact}
For $k\in\{1,\dots,n-1\}$, exactly $n!/(n-1)$ permutations $\pi\in S_n$ satisfy $\Sigma(L_\pi)\equiv k\pmod n$; no permutation satisfies $\Sigma(L_\pi)\equiv0\pmod n$ (recovering~\cite[Prop.~4.6]{LEN}).

\end{corollary}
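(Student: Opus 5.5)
By Theorem~\ref{thm:reduction}, $\Sigma(L_\pi)\equiv \pi(1)-\pi(n)\pmod n$. So the residue of $\Sigma(L_\pi)$ depends only on the ordered pair of endpoints $(\pi(1),\pi(n))$. The whole corollary therefore reduces to counting endpoint pairs by the residue class of their difference.

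\textbf{The zero residue.} Since $\pi$ is a bijection, $\pi(1)\neq\pi(n)$. Both values lie in $\{0,\dots,n-1\}$, so $0<|\pi(1)-\pi(n)|\le n-1$, and hence $\pi(1)-\pi(n)\not\equiv 0\pmod n$. Thus no permutation has $\Sigma(L_\pi)\equiv 0$. This agrees with Proposition~\ref{prop:realizability-obstructions}(1).

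\textbf{The nonzero residues.} I would count by endpoints directly.
\begin{itemize}
\item Fix $k\in\{1,\dots,n-1\}$.
\item The ordered pairs $(a,b)$ of elements of $\Z_n$ with $a-b\equiv k\pmod n$ are exactly $(b+k,b)$ for $b\in\Z_n$. These are $n$ pairs, and each has $a\neq b$ because $k\not\equiv 0$.
\item For each such pair, the permutations with $\pi(1)=a$ and $\pi(n)=b$ correspond to arbitrary arrangements of the remaining $n-2$ values in positions $2,\dots,n-1$. There are $(n-2)!$ of these (for $n\ge2$).
\end{itemize}
The count is therefore $n\cdot(n-2)!=n!/(n-1)$, independent of $k$.

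As a sanity check, summing over the $n-1$ nonzero residues gives $n!$, consistent with Proposition~\ref{prop:cyclic-mass}.

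An alternative route uses symmetry. The unit-group symmetry of Proposition~\ref{prop:cyclic-symmetry} only permutes residues within a fixed gcd class, so it cannot give uniformity over all nonzero residues. The endpoint count above is the cleaner route.

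There is no real obstacle. The only care needed is the degenerate case $n=1$, where the residue set $\{1,\dots,n-1\}$ is empty and the statement is vacuous. I would also note that the identification of residues relies on taking the integer difference $\pi(1)-\pi(n)$ modulo $n$, exactly as Theorem~\ref{thm:reduction} provides.
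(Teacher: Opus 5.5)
Your proof is correct and is essentially the paper's own argument: reduce via Theorem~\ref{thm:reduction} to the endpoint difference $\pi(1)-\pi(n)$, note that $\pi(1)\neq\pi(n)$ rules out residue $0$, and count $n$ ordered pairs per nonzero residue, each realized by $(n-2)!$ permutations. One small correction to your closing remark: for $n=1$ the second clause is not vacuous but false (there $\pi(1)=\pi(n)$ and $\Sigma(L_\pi)=0\equiv 0 \pmod 1$), so the corollary, like both proofs, tacitly requires $n\ge 2$.
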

\begin{proof}
By Theorem~\ref{thm:reduction}, $\Sigma(L_\pi)\equiv\pi(1)-\pi(n)\pmod n$. The pair $(\pi(1),\pi(n))$ is, as $\pi$ ranges uniformly over $S_n$, a uniformly random ordered pair of distinct elements of $\Z_n$ (each of the $n(n-1)$ such pairs arising from exactly $(n-2)!$ permutations). For each fixed $k\ne0$, exactly $n$ of these pairs satisfy $a-b\equiv k$ (namely $b=a-k\bmod n$ for each of the $n$ choices of $a$, automatically giving $b\ne a$ since $k\ne0$), contributing $n\cdot(n-2)!=n!/(n-1)$ permutations; for $k=0$ no pair qualifies, since $a\ne b$.
\end{proof}
\section{Endpoint-refined Eulerian reduction}
Endpoint refinements of Eulerian numbers provide the precise mechanism needed to resolve our cyclic statistic. By Theorem~\ref{thm:reduction}, the study of $T_c(m) = \#\{\pi \in S_n : \Sigma(L_\pi)=m\}$ reduces directly to the joint distribution of ordinary ascents alongside the permutation endpoints $\pi(1)$ and $\pi(n)$. To formalize this, we utilize the following two-endpoint refinement.

\begin{definition}\label{def:E}
For $j \in \{0, \dots, n-1\}$ and distinct $a, b \in \mathbb{Z}_n$, define the endpoint-refined Eulerian count:
\[
E(n,j,a,b) := \#\left\{ \pi \in S_n : \operatorname{asc}(\pi) = j, \, \pi(1) = a, \, \pi(n) = b \right\}.
\]
\end{definition}

Via Theorem~\ref{thm:reduction}, the cyclic distribution expands as
\[
T_c(m) = \sum_{j=0}^{n-1} \sum_{a-b = m-nj} E(n,j,a,b).
\]

\begin{remark}\label{rem:refined-eulerian}
While one-endpoint Eulerian statistics are classical—with Conger~\cite{Conger2010} studying joint distributions of descents and boundary values (see \cite[Eqs.~(2), (8), and Thm.~1]{Conger2010})—our analysis requires the stronger two-endpoint quantity $E(n,j,a,b)$. Summing $E(n,j,a,b)$ over $b$ or $a$ recovers Conger's corresponding one-endpoint statistics after the standard ascent--descent conversion. Rather than proposing $E(n,j,a,b)$ as a novel combinatorial object, our objective is to deploy this framework to extract explicit formulas for $T_n^c(m)$.
\end{remark}
\begin{theorem}\label{thm:E-recursion}
Write $M=n-1$. For $a,b<M$,
\[
E(n,j,a,b) \;=\; j\cdot E(n{-}1,j,a,b) \;+\; (n{-}1{-}j)\cdot E(n{-}1,j{-}1,a,b);
\]
for $b<M$,
\[
E(n,j,M,b) \;=\; \sum_{a'=0}^{M-1} E(n{-}1,j,a',b);
\]
and for $a<M$,
\[
E(n,j,a,M) \;=\; \sum_{b'=0}^{M-1} E(n{-}1,j{-}1,a,b'),
\]
with base case $E(1,0,0,0)=1$.
\end{theorem}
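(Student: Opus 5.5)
The plan is the standard ``insert the largest letter'' argument behind the classical Eulerian recurrence, with the endpoints $\pi(1),\pi(n)$ tracked as well. Set $M=n-1$. Deleting $M$ from $\pi\in S_n$ gives a sequence $\pi'$ of the distinct values $\{0,\dots,n-2\}$, which we view as an element of $S_{n-1}$. Conversely, $\pi$ is recovered from $\pi'$ together with the slot among the $n$ slots of $\pi'$ into which $M$ is inserted. So the map $\pi\mapsto(\pi',\text{slot})$ is a bijection. The slots split into three types: the $n-2$ interior gaps between $\pi'(k)$ and $\pi'(k+1)$, the slot before $\pi'(1)$, and the slot after $\pi'(n-1)$. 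These three types correspond exactly to the three cases of the statement: $a,b<M$, $a=M$, and $b=M$. The case $a=b=M$ cannot occur for $n\ge2$.

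\emph{Interior insertion.} Take $a,b<M$. Then $M$ sits strictly inside $\pi$, so inserting it does not change either endpoint: $\pi(1)=\pi'(1)=a$ and $\pi(n)=\pi'(n-1)=b$. Suppose $M$ is inserted between $x=\pi'(k)$ and $y=\pi'(k+1)$. The adjacency $(x,y)$ is replaced by the two adjacencies $x<M>y$, which contribute one ascent and one descent, and every other adjacency is untouched.
\begin{itemize}
\item If $(x,y)$ was an ascent, then $\operatorname{asc}$ is unchanged.
\item If $(x,y)$ was a descent, then $\operatorname{asc}$ increases by one.
\end{itemize}
Hence $\pi$ with $\operatorname{asc}(\pi)=j$ arises in two ways:
\begin{itemize}
\item from some $\pi'$ with $j$ ascents, by inserting $M$ into one of its $j$ ascent gaps;
\item from some $\pi'$ with $j-1$ ascents, by inserting $M$ into one of its $(n-2)-(j-1)=n-1-j$ descent gaps.
\end{itemize}
In both cases $\pi'$ has the same endpoints $a,b$. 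Counting the two cases gives the first recursion.

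\emph{Boundary insertions.} If $\pi(1)=M$, then $\pi=(M,\pi')$. The new adjacency $M>\pi'(1)$ is a descent, so $\operatorname{asc}(\pi)=\operatorname{asc}(\pi')$. The last entry is $b=\pi'(n-1)$, while the first entry $a'=\pi'(1)$ is arbitrary in $\{0,\dots,M-1\}$; summing over $a'$ gives the second formula. Symmetrically, if $\pi(n)=M$, then $\pi=(\pi',M)$. The new adjacency $\pi'(n-1)<M$ is an ascent, so $\operatorname{asc}(\pi')=j-1$. The first entry $a$ is preserved, and summing over $b'=\pi'(n-1)$ gives the third formula.

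The only delicate step is the small-$n$ bookkeeping rather than any real combinatorics. For $n=2$ the permutation $\pi'$ has a single entry, so $a'=b'$. Definition~\ref{def:E} requires distinct endpoints, so it does not cover this case. I would therefore state explicitly that $E(1,0,0,0)=1$ extends the definition to the degenerate case $a=b$ with $n=1$. The sums in the boundary cases then range over all $a'$ (respectively $b'$), with the convention that terms with $a'=b$ vanish for $n-1\ge2$. With this convention, the cases $n=2$ are checked directly: $E(2,0,1,0)=1$ and $E(2,1,0,1)=1$, as the formulas predict. I would also note that for $n=2$ the interior case is vacuous, consistent with $n-2=0$ gaps.
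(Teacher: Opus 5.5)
Your proposal is correct and follows the paper's proof essentially step for step: you insert the maximal letter $M=n-1$ into one of the $n$ slots of $\pi'\in S_{n-1}$, interior ascent and descent gaps give the first recursion, and front and back insertion give the two boundary sums. Your extra remark is also correct: $E(1,0,0,0)=1$ must be read as extending Definition~\ref{def:E} to the degenerate case $a=b$, with the terms $a'=b$ (respectively $b'=a$) vanishing once $n-1\ge 2$. The paper leaves this bookkeeping point implicit.
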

\begin{proof}
Build $\pi\in S_n$ from $\pi'\in S_{n-1}$ (a permutation of $\{0,\dots,n-2\}$) by inserting the new largest value $M=n-1$ into one of the $n$ gaps of $\pi'$ (before the first entry, between two consecutive entries, or after the last). If $\pi'$ has $j'$ ascents, it has exactly $j'$ internal gaps that are ascents of $\pi'$ and exactly $(n-2)-j'$ that are descents, regardless of which specific $\pi'$ with that ascent count is considered (there are exactly $n-2$ internal gaps and $j'$ of them are ascents, by definition of $j'$). Inserting $M$ into an ascent-gap or descent-gap of $\pi'$ leaves $\pi'(1)$ and $\pi'(n{-}1)$ untouched (so $a,b$ are unchanged from $\pi'$) and changes the ascent count to $j'$ (ascent-gap: the old ascent is replaced by a new ascent, $\cdot<M$, followed by a new descent, $M>\cdot$) or $j'+1$ (descent-gap: symmetric reasoning), respectively; summing over the $j'$ ascent-gaps and $(n-2)-j'$ descent-gaps gives the first formula (with $j'=j$ for the ascent-gap contribution and $j'=j-1$ for the descent-gap contribution, since inserting into a descent-gap of a $(j-1)$-ascent $\pi'$ yields $j$ ascents, and there are $(n-2)-(j-1)=(n-1-j)$ such gaps in $\pi'$). Inserting $M$ at the front makes $\pi(1)=M$, $\pi(n)=\pi'(n{-}1)$ unchanged, and does not change the ascent count (the new pair $(M,\pi'(1))$ is a descent, since $M$ is the largest value); summing over all possible $\pi'(1)$ gives the second formula. Inserting $M$ at the end makes $\pi(n)=M$, $\pi(1)=\pi'(1)$ unchanged, and increases the ascent count by $1$ (the new pair $(\pi'(n{-}1),M)$ is an ascent); summing over all possible $\pi'(n{-}1)$ gives the third formula.
\end{proof}

We verified Theorem~\ref{thm:E-recursion} directly against exhaustive enumeration for $n\le6$. It reduces the computation of $T_c(m)$ from $n!$ to $O(n^4)$ arithmetic operations. Marginalizing over $b$ gives two fully explicit boundary cases.

\begin{corollary}\label{cor:boundary}
Writing $F(n,j,a):=\sum_bE(n,j,a,b)$,
\[
F(n,j,0) = \EN{n-1\atop j-1}, \qquad F(n,j,n-1) = \EN{n-1\atop j}.
\]
\end{corollary}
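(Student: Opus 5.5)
The quickest route avoids the recursion of Theorem~\ref{thm:E-recursion} and reduces each boundary case to the classical Eulerian numbers $\EN{n-1\atop k}$. These count permutations of an $(n-1)$-element ordered set with exactly $k$ ascents.

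First consider $F(n,j,0)$, which counts $\pi\in S_n$ with $\operatorname{asc}(\pi)=j$ and $\pi(1)=0$. Delete the first entry and write $\pi=(0,\sigma)$, where $\sigma$ is a sequence of the $n-1$ distinct values $\{1,\dots,n-1\}$. Since $0$ is the minimum, the pair $(\pi(1),\pi(2))=(0,\sigma(1))$ is always an ascent. Every other adjacent pair of $\pi$ is an adjacent pair of $\sigma$. Hence $\operatorname{asc}(\pi)=1+\operatorname{asc}(\sigma)$. Ascents depend only on relative order, so we may standardize $\sigma$ to a permutation of $\{0,\dots,n-2\}$ without changing its ascent count. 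The map $\pi\mapsto\sigma$ is then a bijection from $\{\pi:\pi(1)=0\}$ onto the permutations of an $(n-1)$-set, and it shifts ascents by exactly one. Therefore $F(n,j,0)=\EN{n-1\atop j-1}$.

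Next consider $F(n,j,n-1)$, with $\pi=(n-1,\sigma)$. Since $n-1$ is the maximum, the first pair is always a descent, so $\operatorname{asc}(\pi)=\operatorname{asc}(\sigma)$. Here $\sigma$ ranges bijectively over all arrangements of $\{0,\dots,n-2\}$, which gives $F(n,j,n-1)=\EN{n-1\atop j}$.

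As a consistency check, the second identity also follows from the second formula of Theorem~\ref{thm:E-recursion}. Summing $E(n,j,M,b)=\sum_{a'}E(n-1,j,a',b)$ over $b$ yields the total count of $(n-1)$-permutations with $j$ ascents. The first identity has no such direct recursive analogue, which is why the deletion argument is preferable.

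The only point needing care is the convention for the boundary indices. When $j=0$ there is no permutation starting with $0$ that has zero ascents (for $n\ge2$), which matches $\EN{n-1\atop -1}=0$. For $n=1$ the two cases coincide, and they should be handled by the convention $\EN{0\atop 0}=1$. I expect no genuine obstacle beyond stating these conventions.
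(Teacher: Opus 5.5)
Your proof is correct and is essentially the paper's own argument: strip off $\pi(1)$, note that $(\pi(1),\pi(2))$ is a forced ascent when $\pi(1)=0$ and a forced descent when $\pi(1)=n-1$, and standardize $\pi(2),\dots,\pi(n)$ to a permutation of $\{0,\dots,n-2\}$. One small correction to your edge-case remark: at $n=1$ no convention rescues the first identity, which would give $F(1,1,0)=\EN{0\atop 0}=1$ even though the unique permutation has no ascents, so that identity simply requires $n\ge 2$, which is exactly when the forced pair $(0,\pi(2))$ exists.
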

\begin{proof}
If $\pi(1)=0$, the pair $(\pi(1),\pi(2))$ is automatically an ascent, so $\mathrm{asc}(\pi)=1+\mathrm{asc}(\tau)$ where $\tau$ is the order-isomorphic permutation of $\{0,\dots,n-2\}$ obtained from $\pi(2),\dots,\pi(n)$; as $\tau$ ranges freely over $S_{n-1}$, this gives $F(n,j,0)=\EN{n-1\atop j-1}$. Symmetrically, if $\pi(1)=n-1$, the pair $(\pi(1),\pi(2))$ is automatically a descent, so $\mathrm{asc}(\pi)=\mathrm{asc}(\tau)$ unchanged, giving $F(n,j,n-1)=\EN{n-1\atop j}$.
\end{proof}
Marginalizing over $b$ yields a recursion for the one-endpoint statistic $F(n,j,a)$, but Theorem~\ref{thm:E-explicit} bypasses recursion entirely to evaluate $E(n,j,a,b)$—and thus $T_n^c(m)$—via an explicit inclusion--exclusion formula.

\begin{remark}\label{rem:two-endpoint-recurrence}
Unlike Conger's one-endpoint recurrence~\cite[Eq.~(12)]{Conger2010}, our analysis requires fixing both endpoints $\pi(1)$ and $\pi(n)$. We include the corresponding two-endpoint recurrence to maintain a self-contained presentation.
\end{remark}

\begin{theorem}\label{thm:E-explicit}
Fix $n\ge1$, $j\in\{0,\dots,n-1\}$, and $a\ne b$ in $\{0,\dots,n-1\}$. Set
\[
p=\min(a,b),\qquad q=n-1-\max(a,b),\qquad c=\max(0,a-b-1),\qquad s=\max(0,b-a-1).
\]
Define $N(n,0,a,b)=1$ if $(a,b)=(n-1,0)$ and $0$ otherwise, and for $i\ge1$,
\[
N(n,i,a,b) \;=\; \sum_{l=0}^{i-1}(-1)^l\binom{i-1}{l}(i-l)^{p+q}(i+1-l)^c(i-1-l)^s.
\]
Then
\[
E(n,j,a,b) \;=\; \sum_{i=0}^{j}(-1)^{j-i}\binom{n-1-i}{j-i}\,N(n,i,a,b).
\]
\end{theorem}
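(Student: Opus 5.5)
The proof inverts a binomial transform: I would interpret $N(n,i,a,b)$ as a count of permutations decorated with bars, prove that it is obtained from $E(n,\cdot,a,b)$ by a binomial transform, and then invert that transform.

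\textbf{Step 1: the transform.} Define $N(n,i,a,b)$ as the number of pairs $(\pi,A)$ such that:
\begin{itemize}
\item $\pi\in S_n$ with $\pi(1)=a$ and $\pi(n)=b$;
\item $A\subseteq\{1,\dots,n-1\}$ with $|A|=i$;
\item $A$ contains every ascent position of $\pi$.
\end{itemize}
A permutation with exactly $k$ ascents admits $\binom{n-1-k}{i-k}$ such sets $A$, since the $i-k$ extra bars are placed among its $n-1-k$ descent positions. Hence
\[
N(n,i,a,b)=\sum_{k=0}^{i}\binom{n-1-k}{i-k}E(n,k,a,b).
\]
The matrix $\bigl(\binom{n-1-k}{i-k}\bigr)_{i,k}$ is lower unitriangular. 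It is the binomial matrix $\binom{m}{r}$ with reversed indexing ($m=n-1-k$, $m-r=n-1-i$), so its inverse is $\bigl((-1)^{i-k}\binom{n-1-k}{i-k}\bigr)$. This gives exactly the stated formula for $E(n,j,a,b)$, with $j$ in place of $i$. I would verify the inversion with the identity $\sum_{i}(-1)^{j-i}\binom{n-1-i}{j-i}\binom{n-1-k}{i-k}=\delta_{jk}$, which reduces to $\sum_r(-1)^r\binom{m}{r}=0$.

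\textbf{Step 2: the closed form for $N$.} Cutting $\pi$ at the bars in $A$ produces $i+1$ nonempty blocks. Each block must be strictly decreasing, because every non-bar position is a descent. Conversely, a set partition of $\{0,\dots,n-1\}$ into $i+1$ ordered nonempty blocks determines $(\pi,A)$ uniquely by writing each block in decreasing order. So $N$ counts surjections $f:\{0,\dots,n-1\}\to\{1,\dots,i+1\}$ subject to the endpoint conditions:
\begin{itemize}
\item $a$ is the maximum of block $1$, i.e.\ $f(a)=1$ and no value $x>a$ has $f(x)=1$;
\item $b$ is the minimum of block $i+1$, i.e.\ $f(b)=i+1$ and no value $x<b$ has $f(x)=i+1$.
\end{itemize}

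Now count the free choices for the remaining values.
\begin{itemize}
\item The $p$ values below $\min(a,b)$ are barred only from block $i+1$, giving $i$ choices each.
\item The $q$ values above $\max(a,b)$ are barred only from block $1$, giving $i$ choices each.
\item If $a>b$, the $c$ values strictly between $b$ and $a$ are unrestricted, giving $i+1$ choices each.
\item If $a<b$, the $s$ values strictly between $a$ and $b$ are barred from both end blocks, giving $i-1$ choices each.
\end{itemize}
For $i\ge1$ the end blocks are automatically nonempty, since $a\ne b$ lie in blocks $1$ and $i+1$. Surjectivity therefore only concerns the $i-1$ middle blocks. Inclusion--exclusion over a set of $l$ middle blocks forced empty lowers each choice count by $l$, which yields
\[
\sum_{l}(-1)^l\binom{i-1}{l}(i-l)^{p+q}(i+1-l)^c(i-1-l)^s.
\]
For $i=0$ there is a single block, decreasing from $a=n-1$ down to $b=0$, matching the stated definition of $N(n,0,a,b)$.

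\textbf{Expected obstacle.} The main care is in the degenerate cases. These include $i=1$, where there are no middle blocks, and the convention $0^0=1$ when a base such as $i-1-l$ vanishes with a zero exponent. I must also check that $f(a)=1$ and $f(b)=i+1$ are consistent with the block maximum and minimum conditions, since $a$ itself could violate neither. The case $n=1$, where $a\ne b$ is impossible, is vacuous.
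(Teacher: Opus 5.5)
Your proposal is correct and takes essentially the same route as the paper. Your pairs $(\pi,A)$ with $A$ containing every ascent position are exactly the paper's ordered partitions into $i+1$ decreasing blocks with the ascent condition at internal boundaries dropped. This gives the same unitriangular transform $N(n,i,a,b)=\sum_{k}\binom{n-1-k}{i-k}E(n,k,a,b)$, and your four eligibility classes and inclusion--exclusion over empty middle blocks match the paper's computation of $N$. Your explicit checks of the inversion (via $\sum_r(-1)^r\binom{m}{r}=0$), of the case $i=0$, and of the convention $0^0=1$ fill in points the paper only asserts or leaves implicit.
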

\begin{proof}
Reformulate $E(n,j,a,b)$ via the run decomposition of~\S\ref{sec:Tc-reduction}'s proof: a permutation with $\mathrm{asc}(\pi)=j$ decomposes into $j+1$ maximal decreasing runs $R_1,\dots,R_{j+1}$, with $\max(R_1)=\pi(1)$, $\min(R_{j+1})=\pi(n)$, and $\min(R_k)<\max(R_{k+1})$ (a genuine ascent) at each of the $j$ internal boundaries. So $E(n,j,a,b)$ counts ordered partitions of $\Z_n$ into exactly $j+1$ nonempty parts with $\max(R_1)=a$, $\min(R_{j+1})=b$, and every internal boundary a genuine ascent.

For $i\ge j$, let $N(n,i,a,b)$ count the same ordered partitions into exactly $i+1$ nonempty parts with $\max(R_1)=a$, $\min(R_{i+1})=b$, but with \emph{no} constraint at the internal boundaries. Any such raw partition, upon merging every pair of adjacent parts whose boundary is not a genuine ascent, reduces to a uniquely determined valid $(j+1)$-part configuration (for some $j\le i$) together with a choice of which $i-j$ of its $n-1-j$ internal within-run positions are also marked as (non-ascent) raw boundaries; this gives
\[
N(n,i,a,b) = \sum_{j=0}^i \binom{n-1-j}{i-j}E(n,j,a,b),
\]
and inverting this upper-triangular (in $i,j$, with unit diagonal) binomial transform gives the stated formula for $E$ in terms of $N$ (a direct induction on $j$ confirms the stated inverse, and we have also verified it by direct computation for $n\le7$).

It remains to compute $N(n,i,a,b)$ for $i\ge1$. Choosing $R_1$ (containing $a$ as its max) and $R_{i+1}$ (containing $b$ as its min) and distributing the remaining $n-2$ elements into $R_1$, $R_{i+1}$, or one of $i-1$ unordered-but-labelled ``middle'' slots (later requiring each middle slot nonempty by inclusion--exclusion) partitions those $n-2$ elements into four eligibility classes according to whether they may join $R_1$ only (values $<a$ and $<b$; there are $p=\min(a,b)$ of these, matching the retained values in $\{0,\dots,\min(a,b)-1\}$), $R_{i+1}$ only (values $>a$ and $>b$; there are $q=n-1-\max(a,b)$ of these), both $R_1$ and $R_{i+1}$ (values strictly between $b$ and $a$, when $a>b$; there are $c=\max(0,a-b-1)$ of these), or neither (values strictly between $a$ and $b$, when $a<b$, forced into a middle slot; there are $s=\max(0,b-a-1)$ of these). With $k$ available middle slots, each of the $p$-class elements has $1+k$ eligible targets ($R_1$ or a middle slot), each of the $q$-class elements likewise $1+k$, each of the $c$-class elements has $2+k$ (either boundary run or a middle slot), and each of the $s$-class elements has only $k$ (a middle slot); by independence, the number of ways to assign all $n-2$ elements with exactly $k$ middle slots available is $(1+k)^{p+q}(2+k)^ck^s$. Requiring all $i-1$ middle slots to be nonempty, by inclusion--exclusion over which are left empty,
\[
N(n,i,a,b) = \sum_{l=0}^{i-1}(-1)^l\binom{i-1}{l}\bigl(1+(i-1-l)\bigr)^{p+q}\bigl(2+(i-1-l)\bigr)^c(i-1-l)^s,
\]
which is the stated formula.
\end{proof}
\begin{remark}\label{rem:conger-explicit}
While Conger~\cite[Thm.~1, Eq.~(16)]{Conger2010} provides an inclusion--exclusion formula for the one-endpoint refinement with a prescribed first entry, our formulation addresses the two-endpoint case required for the cyclic Latin--Eulerian reduction
\[
T_n^c(m) = \sum_{j,a,b} E(n,j,a,b).
\]
We include a complete proof of this two-endpoint formula to maintain a self-contained presentation of the cyclic application.
\end{remark}
\begin{corollary}\label{cor:Tc-explicit}
For every integer $m$,
\[
T_n^c(m) \;=\; \sum_{j=0}^{n-1}\ \sum_{\substack{a,b\in\{0,\dots,n-1\}\\ a-b=m-nj}} E(n,j,a,b),
\]
with $E(n,j,a,b)$ given explicitly by Theorem~\ref{thm:E-explicit}; in particular $T_c(m)=0$ unless $n-1\le m\le(n-1)^2$ (matching the range of $\Sigma$ on $\mathcal L_n$ from~\cite{LEN}, since $\Sigma(L_\pi)$ ranges over exactly this interval). Moreover
\[
\sum_{m} T_c(m) = n!.
\]
\end{corollary}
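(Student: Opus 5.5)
The corollary has three parts: the summation formula, the support range, and the total mass.

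\textbf{Summation formula.} By Theorem~\ref{thm:reduction}, $\Sigma(L_\pi)=m$ holds if and only if $n\,\mathrm{asc}(\pi)+\pi(1)-\pi(n)=m$. Partition $S_n$ according to the triple $(\mathrm{asc}(\pi),\pi(1),\pi(n))=(j,a,b)$. These classes are disjoint, and each $\pi$ lies in exactly one of them, with $a\ne b$ automatically. The class for $(j,a,b)$ has size $E(n,j,a,b)$ by Definition~\ref{def:E}, and it contributes to $T_n^c(m)$ exactly when $a-b=m-nj$. Summing over $j$ gives the stated double sum. The explicit form of $E$ is simply imported from Theorem~\ref{thm:E-explicit}, so nothing further is needed here. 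One may also set $E(n,j,a,a)=0$, so that pairs with $a=b$ in the index set contribute nothing.

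\textbf{Support range.} It suffices to show $n-1\le\Sigma(L_\pi)\le(n-1)^2$ for every $\pi$. Use the difference form \eqref{eq:cyclic-ascent-transition}, $\Sigma(L_\pi)=n(n-1)-\sum_j\delta_j$, together with $1\le\delta_j\le n-1$ for each of the $n-1$ steps. This gives
\[
n(n-1)-(n-1)^2 \;\le\; \Sigma(L_\pi) \;\le\; n(n-1)-(n-1),
\]
that is, $n-1\le\Sigma(L_\pi)\le(n-1)^2$. Hence $T_n^c(m)=0$ outside this interval.

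The parenthetical claim that $\Sigma$ ranges over exactly this interval needs care. By Theorem~\ref{thm:cyclic-boundary-ascents} both endpoints are attained, since $T_n^c(n-1)=n$. However, Proposition~\ref{prop:realizability-obstructions} forbids multiples of $n$, and Theorem~\ref{thm:cyclic-boundary-ascents} gives $T_n^c(n)=0$. So "exactly this interval" should be read as "this is the smallest interval containing the range." I would state that reading explicitly rather than claim surjectivity.

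\textbf{Total mass.} Summing over all $m$ counts each $\pi\in S_n$ exactly once, because the fibers $\{\pi:\Sigma(L_\pi)=m\}$ partition $S_n$. This gives $\sum_m T_n^c(m)=n!$, which is consistent with Proposition~\ref{prop:cyclic-mass}. Equivalently, $\sum_{j,a,b}E(n,j,a,b)=n!$.

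\textbf{Main obstacle.} None of these steps is substantive. The only delicate point is the wording "ranges over exactly this interval." I would handle it by proving only the containment, plus attainment of the two endpoints, and noting the gaps at multiples of $n$.
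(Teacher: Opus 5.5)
Your argument is correct. For the summation formula and the total mass it is the same as the paper's: you partition $S_n$ by $(\mathrm{asc}(\pi),\pi(1),\pi(n))$ using Theorem~\ref{thm:reduction}, and you observe that the fibres of $\Sigma$ partition $S_n$.

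You take a different route for the support range. The paper imports $n-1\le\Sigma(L)\le(n-1)^2$ from \cite[Thm.~4.4]{LEN}, a bound valid for all Latin squares, and specialises it to $L_\pi$. You instead derive it directly from \eqref{eq:cyclic-ascent-transition} and $1\le\delta_j\le n-1$. Your version is self-contained and uses only the cyclic structure. The paper's citation additionally situates the bound within all of $\mathcal L_n$, which is the point of its parenthetical about ``matching'' that range.

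Your caution about ``exactly this interval'' is justified, and the paper's proof does not address it. By Proposition~\ref{prop:realizability-obstructions}(1) no multiple of $n$ is attained, so for $n\ge3$ the range is a proper subset of the interval. The defensible claim is only that both endpoints are attained. For every $n\ge2$ this follows from Theorem~\ref{thm:single-delta} with $\delta=n-1$ and $\delta=1$, which avoids the $n\ge3$ hypothesis of Theorem~\ref{thm:cyclic-boundary-ascents}.

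One small correction. Your remark ``$a\ne b$ automatically'' and the convention $E(n,j,a,a)=0$ hold only for $n\ge2$. For $n=1$ one has $\pi(1)=\pi(n)=0$, and the formula needs $E(1,0,0,0)=1$. It is therefore safer to extend $E$ to $a=b$ by its counting definition than to set it to zero.
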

\begin{proof}
The first identity restates $T_c(m)=\#\{\pi:\Sigma(L_\pi)=m\}$ via Theorem~\ref{thm:reduction} and Definition~\ref{def:E}, decomposing the count by the value of $\mathrm{asc}(\pi)=j$. The range restriction $n-1\le m\le(n-1)^2$ is~\cite[Thm.~4.4]{LEN}'s bound on $\Sigma(L)$, valid for every Latin square $L$ and so in particular for every $L_\pi$. The final identity holds since $\{T_c(m)\}_m$ partitions $S_n$ by the value of $\Sigma(L_\pi)$, so the total is $|S_n|=n!$; equivalently, it is the $x=1$ specialization of $T_c(x)=\sum_\pi x^{\Sigma(L_\pi)}$.
\end{proof}

\begin{question}\label{q:dominant-bridge}
Can we determine a closed form or generating-function identity for $D_n(\mathbf{m})$ (equivalently for $\Gamma_n$) when the profile $\mathbf{m}$ features a single dominant step size bridged by a bounded sequence of non-identical transitions?

A positive resolution would yield an exact formula for $T_n^c(q)$ and, via the specialization of $\Gamma_n$, provide a fully explicit characterization of the cyclic sub-array, complementing the broader study of the cyclic Latin--Eulerian array $\left\{\LE{n\atop\mathbf k}_c\right\}$.
\end{question}
\section{Exact computational data}

The complete total-ascent rows $T_n(m)$ for $n \le 6$ are detailed below, where each row is indexed from $m = n-1$ up to $(n-1)^2$.

\begin{center}
\small
\begin{longtable}{c|l}
\toprule
$n$ & $(T_n(n-1),\ldots,T_n((n-1)^2))$\\
\midrule
1 & $1$\\
2 & $2$\\
3 & $6,0,6$\\
4 & $24,0,24,480,24,0,24$\\
5 & $120,0,120,7440,16680,32880,46800,32880,16680,7440,120,0,120$\\
6 & $720,0,2160,59040,375120,3212640,11380320,42014880,88725600,$\\
  & $149171040,222968160,149171040,88725600,42014880,11380320,$\\
  & $3212640,375120,59040,2160,0,720$\\
\bottomrule
\end{longtable}
\end{center}
Summing the entries of the full distribution recovers the total number of Latin squares $\sum_m T_n(m) = L_n$ (\href{https://oeis.org/A002860}{OEIS~A002860}), evaluating to $L_1=1, \dots, L_6=812{,}851{,}200$.

By restricting to the $n!$ row permutations, the cyclic distributions are highly efficient to compute. Appendix~A provides the complete cyclic data through $n=10$, from which we extract the directed-inventory support sizes $R_n := |\operatorname{supp}(D_n)|$:
\begin{equation*}
    (R_1, \dots, R_{10}) = (1, 1, 2, 5, 16, 59, 234, 979, 4258, 18119).
\end{equation*}

The tables below list $T_n^c(m)$ for the valid range $m=n-1,\dots,(n-1)^2$.
\small
\begin{longtable}{>{\centering\arraybackslash}p{0.06\textwidth} p{0.88\textwidth}}
\toprule
$n$ & $\bigl(T_n^c(n-1),\ldots,T_n^c((n-1)^2)\bigr)$\\
\midrule
1 & $1$\\
2 & $2$\\
3 & $3,0,3$\\
4 & $4,0,4,8,4,0,4$\\
5 & $5,0,5,10,20,20,0,20,20,10,5,0,5$\\
6 & $6,0,6,12,24,48,66,0,66,84,96,84,66,0,66,48,24,12,6,0,6$\\
7 & \ttfamily\seqsplit{7,0,7,14,28,56,112,182,0,182,252,336,420,462,462,0,462,462,420,336,252,182,0,182,112,56,28,14,7,0,7}\\
8 & \ttfamily\seqsplit{8,0,8,16,32,64,128,256,456,0,456,656,928,1280,1696,2096,2416,0,2416,2736,2976,3072,2976,2736,2416,0,2416,2096,1696,1280,928,656,456,0,456,256,128,64,32,16,8,0,8}\\
9 & \ttfamily\seqsplit{9,0,9,18,36,72,144,288,576,1080,0,1080,1584,2304,3312,4680,6444,8514,10719,0,10719,12924,15264,17568,19584,21024,21744,21744,0,21744,21744,21024,19584,17568,15264,12924,10719,0,10719,8514,6444,4680,3312,2304,1584,1080,0,1080,576,288,144,72,36,18,9,0,9}\\
10 & \ttfamily\seqsplit{10,0,10,20,40,80,160,320,640,1280,2470,0,2470,3660,5400,7920,11520,16560,23400,32220,42930,0,42930,53640,66240,80640,96480,113040,129240,143940,156190,0,156190,168440,178240,184640,186880,184640,178240,168440,156190,0,156190,143940,129240,113040,96480,80640,66240,53640,42930,0,42930,32220,23400,16560,11520,7920,5400,3660,2470,0,2470,1280,640,320,160,80,40,20,10,0,10}\\
\bottomrule
\end{longtable}
\normalsize
All cyclic data in this section were generated by exhaustive enumeration of the $n!$ permutations of $\mathbb{Z}_n$, verified by the consistency identities
\[
\sum_m T_n^c(m) = \sum_{\mathbf{m}} D_n(\mathbf{m}) = n!.
\]
For the full Latin-square distribution $T_n(m)$, exact enumeration extends through $n=6$, successfully recovering the known total $L_6 = 812{,}851{,}200$.

\begin{table}[htbp]\centering\small
\caption{All nonzero directed cyclic difference profiles
\(D_{4}(\mathbf m)\).}
\label{tab:D4-profiles}
\renewcommand{\arraystretch}{1.05}
\begin{tabular}{c|c|r|r}
\(m=\Sigma(L_\pi)\) & \(\mathbf m=(M_\pi(1),\ldots,M_\pi(3))\)
& \(D_4(\mathbf m)\) & \(D_n(\mathbf m)/n\)\\ \hline
3 & $(0,0,3)$ & 4 & 1\\
5 & $(0,2,1)$ & 4 & 1\\
6 & $(1,1,1)$ & 8 & 2\\
7 & $(1,2,0)$ & 4 & 1\\
9 & $(3,0,0)$ & 4 & 1\\
\end{tabular}
\end{table}
\begin{table}[htbp]\centering\small
\caption{All nonzero directed cyclic difference profiles
\(D_{5}(\mathbf m)\).}
\label{tab:D5-profiles}
\renewcommand{\arraystretch}{1.05}
\begin{tabular}{c|c|r|r}
\(m=\Sigma(L_\pi)\) & \(\mathbf m=(M_\pi(1),\ldots,M_\pi(4))\)
& \(D_5(\mathbf m)\) & \(D_n(\mathbf m)/n\)\\ \hline
4 & $(0,0,0,4)$ & 5 & 1\\
6 & $(0,0,2,2)$ & 5 & 1\\
7 & $(0,1,1,2)$ & 10 & 2\\
8 & $(0,0,4,0)$ & 5 & 1\\
8 & $(0,2,0,2)$ & 5 & 1\\
8 & $(1,0,1,2)$ & 10 & 2\\
9 & $(0,2,1,1)$ & 10 & 2\\
9 & $(1,0,2,1)$ & 10 & 2\\
11 & $(1,1,2,0)$ & 10 & 2\\
11 & $(1,2,0,1)$ & 10 & 2\\
12 & $(0,4,0,0)$ & 5 & 1\\
12 & $(2,0,2,0)$ & 5 & 1\\
12 & $(2,1,0,1)$ & 10 & 2\\
13 & $(2,1,1,0)$ & 10 & 2\\
14 & $(2,2,0,0)$ & 5 & 1\\
16 & $(4,0,0,0)$ & 5 & 1\\
\end{tabular}
\end{table}
\begin{table}[htbp]\centering
\caption{For \(n=6\), grouping the \(59\) nonzero profiles by the resulting
cyclic total-ascent value \(m\).}
\label{tab:D6-grouped}
\begin{tabular}{c|r|r}
\(m\) & \(\#\{\mathbf m:D_6(\mathbf m)>0,\ \Sigma=m\}\)
& \(T_6^c(m)=\sum_{\Sigma=m}D_6(\mathbf m)\)\\ \hline
5 & 1 & 6\\
7 & 1 & 6\\
8 & 1 & 12\\
9 & 3 & 24\\
10 & 3 & 48\\
11 & 4 & 66\\
13 & 7 & 66\\
14 & 6 & 84\\
15 & 7 & 96\\
16 & 6 & 84\\
17 & 7 & 66\\
19 & 4 & 66\\
20 & 3 & 48\\
21 & 3 & 24\\
22 & 1 & 12\\
23 & 1 & 6\\
25 & 1 & 6\\
\end{tabular}
\end{table}
%

\section{Concluding remarks}
By shifting focus from $\mathcal{L}_n$ to the row-reorderings $L_\pi \in S_n$, cyclic Latin--Eulerian numbers yield an arithmetically rich yet tractable framework. We established exact fiber counts for dominant step profiles via coprimality criteria (Theorem~\ref{thm:two-value}) while quantifying their divergence from the full Latin array. For the total-ascent statistic $T_c$, Theorem~\ref{thm:reduction} reduces the problem to ordinary ascents and boundary endpoints, leading to an explicit inclusion--exclusion formula (Theorem~\ref{thm:E-explicit}).

\end{document}